\documentclass[11pt]{amsart}

\usepackage{amsmath,amsthm,amssymb,fourier,mathrsfs,enumerate,color,ifpdf}
\usepackage{tikz,tikz-cd}
\usetikzlibrary{decorations.pathmorphing,arrows,intersections}
\usepackage{enumitem}
\usepackage{float}
\usepackage{csquotes}
\usepackage[
  backend=biber,
  style=alphabetic,
  sorting=nyt,
  maxcitenames=99,
  maxalphanames=99,
  minalphanames=99,
]{biblatex}
\usepackage[colorlinks=true,
            linkcolor=red,
            citecolor=blue,
            urlcolor=blue]{hyperref}
\usepackage{listings}
\usepackage{xcolor}

\lstdefinelanguage{Sage}{
  language=Python,
  morekeywords={var,plot,matrix,Graph,graphs},
  sensitive=true
}

\DeclareLabelalphaTemplate{
  \labelelement{
    \field[strwidth=1,strside=left,uppercase]{labelname}
  }
  \labelelement{
    \field[strwidth=2,strside=right]{year}
  }
}

\DeclareCiteCommand{\cite}[\mkbibbrackets]
  {\usebibmacro{prenote}}
  {%
    \usebibmacro{citeindex}%
    \bibhyperref{%
      \printfield{labelalpha}%
      \printfield{extraalpha}%
    }%
  }
  {\addcomma\space}
  {\usebibmacro{postnote}}
\defbibenvironment{bibliography}
  {\list
     {\mkbibbrackets{\printfield{labelalpha}%
        \printfield{extraalpha}}} % left label
     {\setlength{\labelwidth}{4.5em}%
      \setlength{\leftmargin}{\labelwidth}%
      \addtolength{\leftmargin}{\labelsep}%
      \setlength{\itemsep}{\bibitemsep}%
      \setlength{\parsep}{\bibparsep}}%
  }
  {\endlist}
  {\item}

\DeclareBibliographyDriver{article}{%
  \printnames{author}%
  \newunit\newblock
  \printfield{title}%
  \newunit\newblock
  \printfield{journaltitle}%
  \setunit*{\addspace}%
  \printfield{volume}%
  \setunit{\addcolon\space}%
  \printfield{number}%
  \setunit{\addcomma\space}%
  \printfield{year}%
  \newunit\newblock
  \printfield{pages}%
  \newunit\newblock
  \printfield{doi}%
  \finentry
}

\DeclareBibliographyDriver{incollection}{%
  \printnames{author}%
  \newunit\newblock
  \printfield{title}% chapter title
  \newunit\newblock
  \printtext[emph]{\printfield{booktitle}}% book title italic
  \printtext[emph]{\printfield{note}}
  \newunit\newblock
  \printnames{editor}%
  \newunit\newblock
  \printlist{publisher}%
  \setunit{\addspace}%
  \printfield{year}%
  \newunit\newblock
  \printfield{pages}%
  \newunit\newblock
  \printfield{doi}%
  \finentry
}

\usepackage{caption}
\usepackage{subcaption}
\usepackage{comment}

\newcommand{\SL}{\operatorname{SL}}
\newcommand{\PSL}{\operatorname{PSL}}

\newcommand{\tr}{\operatorname{tr}}

\newcommand{\dist}{\operatorname{dist}}

\newcommand{\setdef}[2]{ \left\{ {#1}\ : \ {#2} \right\} }

\newtheorem{theorem}{Theorem}[section]
\newtheorem{thm}[theorem]{Theorem}
\newtheorem{mthm}{Theorem}

\newtheorem{lemma}[theorem]{Lemma}

\newtheorem{corollary}[theorem]{Corollary}

\newtheorem{proposition}[theorem]{Proposition}

\newtheorem{defn}[theorem]{Definition}

\numberwithin{equation}{section}

\begin{document}
\title{On the length sets of closed hyperbolic surfaces}

\author{Yanlong Hao}
\address{University of Michigan, Ann Arbor, Michigan, USA}
\email{ylhao@umich.edu}
\keywords{closed hyperbolic surfaces, length sets, length spectra,
trace sets, Fuchsian groups, Teichm\"uller space, closed geodesics,
algebraic exceptional sets}
\subjclass[2020]{Primary 30F35; Secondary 30F60, 53C22, 20H10}
\date{\today}

\begin{abstract}
For every closed surface $\Sigma_g$ of genus $g\geq6$, we prove
that there exists a countable union of positive-codimension
algebraic subsets of Teichm\"uller space such that, for every
hyperbolic metric $d$ outside this exceptional set, the number of
distinct primitive closed-geodesic lengths at most $L$ is bounded below by
$\tau(d)e^{\delta(g)L}$, where the explicit constant $\delta(g)$
satisfies $\delta(g)>\frac12$.
\end{abstract}

\maketitle
\section{Introduction}
Let $(\Sigma,d)$ be a closed hyperbolic surface and let
\[
\mathcal L_d^*(\Sigma)
=\{\ell_d(\gamma):\gamma\text{ is a primitive closed geodesic on }\Sigma\}
\]
be the \textit{primitive length set} of $\Sigma$, with repetitions omitted, and define
\[
N_d^*(L)=\#\left(\mathcal L_d^*(\Sigma)\cap[0,L]\right).
\]

The distinction between the counting of closed geodesics and that of distinct lengths is fundamental. By the prime geodesic theorem of Huber, the number
\[
\Pi_d(L)
=\#\left\{\gamma:
\gamma\text{ is a primitive closed geodesic and }
\ell_d(\gamma)\leq L\right\}
\]
satisfies
\[
\Pi_d(L)\sim \frac{e^L}{L}
\qquad\text{as }L\longrightarrow\infty;
\]
see \cite{MR109212}, and see \cite{MR2035655} for the dynamical formulation in negative curvature. Thus, the number of primitive closed geodesics grows exponentially. The function $N_d^*(L)$, however, counts only the distinct values assumed by their lengths, and different free homotopy classes may determine the same value.

Such coincidences cannot be avoided. Randol proved that the length spectrum of every compact hyperbolic surface has unbounded multiplicity \cite{MR553396}; see also \cite{MR1622287}. This result concerns individual length values of large multiplicity, but it does not determine the global rate of growth of the set of distinct lengths. In particular, unbounded multiplicity is compatible with the possibility that the length set itself still grows exponentially. This leads to the following geometric question:
\[
\textit{How rapidly must the set of distinct primitive closed-geodesic lengths grow?}
\]

Our main result gives an exponential lower bound for every hyperbolic metric outside an algebraically defined exceptional subset of Teichm\"uller space.

\begin{mthm}\label{thm}
Let $\Sigma_g$ be a closed surface of genus $g\geq 6$. There exists a subset
\[
\mathcal T_{\mathrm{sing}}\subset\mathcal T_g
\]
which is a countable union of algebraic subsets of positive codimension, and there exists an explicit constant $\delta(g)>\frac{1}{2}$ such that the following holds. For every
\[
d\in\mathcal T_g\setminus\mathcal T_{\mathrm{sing}},
\]
there is a constant $\tau(d)>0$ for which
\[
N_d^*(L)\geq \tau(d) e^{\delta(g)L}
\]
for all sufficiently large $L$. The constant $\delta(g)$ is defined in equation~\eqref{eq: defind of deltag}. 
\end{mthm}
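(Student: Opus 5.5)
We argue through traces. Realize $d$ by a Fuchsian representation $\rho:\pi_1(\Sigma_g)\to\PSL(2,\mathbb R)$. For a primitive hyperbolic class $\gamma$ we have $|\tr\rho(\gamma)|=2\cosh(\ell_d(\gamma)/2)$. Hence $N_d^*(L)$ equals the number of distinct absolute traces $|\tr\rho(\gamma)|\le 2\cosh(L/2)$, and it suffices to produce about $e^{\delta L}$ primitive classes whose traces are pairwise distinct. Each trace $\tr\rho(w)$ is a polynomial, with integer coefficients, in the trace coordinates of Teichm\"uller space. Concretely, $\mathcal T_g$ embeds into the character variety via finitely many trace functions, for example Fricke coordinates. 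For two words $w,w'$, the identity $\tr\rho(w)=\pm\tr\rho(w')$ therefore defines an algebraic subset $Z_{w,w'}$. This subset is either all of $\mathcal T_g$ or of positive codimension. We set
\[
\mathcal T_{\mathrm{sing}}=\bigcup\setdef{Z_{w,w'}}{w,w'\text{ in the chosen family},\ Z_{w,w'}\neq\mathcal T_g}.
\]
This is a countable union of proper algebraic subsets. For $d$ outside it, two words in the family have equal lengths only if their trace polynomials coincide \emph{identically}. The theorem thus reduces to a purely combinatorial-algebraic statement: find a family $\mathcal F$ of primitive words, with at least $\tau e^{\delta(g)L}$ members of length at most $L$ (uniformly, by a word-length versus geometric-length comparison), whose trace polynomials are pairwise distinct up to sign as functions on $\mathcal T_g$.

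To build $\mathcal F$, we pick inside $\Sigma_g$ a collection of disjoint or nicely configured subsurfaces; genus $g\ge6$ gives room for several pairs of pants or one-holed tori. We then form words as products of fixed "letters" $a_1,\dots,a_k$, each a simple closed curve or short element, arranged in a free subsemigroup. For positive words in a Schottky-like free semigroup, the trace of a product $a_{i_1}\cdots a_{i_n}$ is roughly the product of the individual factors. The length is then approximately additive, $\ell(w)\approx\sum \ell(a_{i_j})-$ (a bounded correction), so the count of words of length at most $L$ grows like $e^{hL}$. Here $h$ is the entropy of the weighted semigroup, determined by $\sum_i e^{-h\ell(a_i)}=1$. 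Distinctness of the trace polynomials should be proved by a degeneration argument. We choose a path to the boundary of Teichm\"uller space, pinching or stretching suitable curves, along which the leading asymptotic term of $\tr\rho(w)$ records the word $w$ up to cyclic permutation and inversion. Two polynomials that agree identically must agree on every such path, so the words coincide up to cyclic equivalence, and hence give the same geodesic. Equivalently, we exhibit one specific metric, for instance one with algebraically independent Fenchel--Nielsen trace data, at which the traces of the words in $\mathcal F$ are visibly distinct. This shows the polynomials are distinct, and then genericity does the rest.

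The exponent: we need $\delta(g)>1/2$. Trace coincidences such as those coming from the Horowitz/Randol trace identities ($w$ and its reverse have equal trace, and so on) force a loss. The naive bound from counting only pairwise non-conjugate-or-reverse classes is useless here, since identical polynomials can arise in many other ways. So we choose $\mathcal F$ to consist of words in which the degeneration argument applies, for example words with letters drawn from distinct subsurfaces in an order that is recoverable. We then compute the resulting growth rate explicitly as a function of $g$, and this defines $\delta(g)$ in \eqref{eq: defind of deltag}. Increasing the genus gives more independent letters, which pushes the entropy above $1/2$ once $g\ge6$. The main obstacle, as I see it, is this step: obtaining an explicit family whose trace polynomials are provably pairwise distinct while the geometric lengths stay nearly additive, with a uniform length comparison, so that the entropy honestly exceeds $1/2$. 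My first attempt would use letters supported in disjoint one-holed tori joined through a common pants decomposition. I would run the pinching argument with Fenchel--Nielsen length parameters tending to zero, where the traces become Laurent polynomials whose leading monomials encode the letter sequence.
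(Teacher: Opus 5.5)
There is a genuine gap. You reduce the theorem to the existence of a family $\mathcal F$ that (i) grows with exponent above $\frac12$ \emph{uniformly in} $d$ and (ii) has pairwise distinct trace functions, and then you establish neither.

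For (i), your entropy $h$, defined by $\sum_i e^{-h\ell_d(a_i)}=1$, depends on the metric. For any fixed finite set of letters there are points of $\mathcal T_g$ (e.g.\ along a pseudo-Anosov orbit) where every $\ell_d(a_i)$ is arbitrarily large, so $h$ has no positive lower bound. If instead the letters depend on $d$, you need uniform bounds on their lengths and on the defect in the additivity of lengths, and you give neither. The claim that more genus ``pushes the entropy above $1/2$ once $g\ge6$'' has no argument behind it. Whatever constant it produced would also not be the $\delta(g)$ fixed by \eqref{eq: defind of deltag}.

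The missing idea is the paper's. Cut along a nice splitting multicurve $\alpha$ of length at most $H(g)$ (Proposition~\ref{Propo: nice splitting multicure}), so that the complementary surface $\Sigma'$ has area $(4g-6)\pi$ and short boundary. Using its convex core as a test set bounds the Cheeger constant of $X_d'=\Gamma_d'\backslash\mathbf H^2$ by $H(g)/((4g-6)\pi)$. The effective Buser bound (Lemma~\ref{lemma: better kappa}) together with $\lambda_0=\delta(1-\delta)$ (Theorem~\ref{cheeger}) then gives $\delta(\Gamma_d')\ge\delta(g)>\frac12$ for $g\ge6$. Orbit counting for the convex-cocompact group $\Gamma_d'$ (Theorem~\ref{Thm: critical}) supplies the exponentially many elements. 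This is exactly where the formula for $\delta(g)$ and the threshold $g\ge6$ come from.

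For (ii), your degeneration argument via leading monomials is only a heuristic. Pairwise distinctness is also more than is needed, and it is hard to guarantee because universal identities occur. Reversal preserves traces in rank-two subgroups. In the paper's own family, $a_1C$ is conjugate to $a_1$ for $C=[b_1,a_1]\in\pi_1(\Sigma')$.

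The paper instead uses the specific family $\{A_dX : X\in\pi_1(\Sigma')\}$, with $A_d=a_1$ or $a_1^{-1}a_2$, and proves only bounded multiplicity. Fenchel--Nielsen twists along $C$ (resp.\ $b_1,b_2$) move the image of $A_d$ while fixing $\phi_d(\pi_1(\Sigma'))$. So the trace of the twisted image of $A_d$ times $\phi_d(X)\pm\phi_d(Y)$ must vanish for all small twist parameters. Linear independence of the exponentials in those parameters then forces $\phi_d(X)\pm\phi_d(Y)$ to be a multiple of $\mathrm{diag}(\kappa,1)$ in the torus case and zero in the pants case (Lemmas~\ref{lemma: trace of case 1} and~\ref{lemma: trace of case 2}). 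This gives classes of size at most three, respectively one, and primitivity of $A_dX$ follows from homology.

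Finally, $\mathcal T_{\mathrm{sing}}$ must be defined over all non-trace-equivalent pairs in $\pi_1(\Sigma_g)$, not over a family chosen depending on $d$. Otherwise your exceptional set is circularly defined.
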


There are three features of Theorem~\ref{thm} that we wish to emphasize. First, it shows that the exponentially many primitive closed geodesics cannot collapse onto only subexponentially many length values. Second, for fixed genus, the exponent $\delta(g)$ is uniform over all metrics outside the exceptional set; only the multiplicative constant $\tau(d)$ depends on the metric. Third, the genericity statement is pointwise and algebraic rather than merely measure-theoretic. The theorem holds at every point outside a countable union of proper algebraic loci defined by trace coincidences.

The gaps in the primitive length set are of interest. List $\mathcal{L}_d^*=\{\ell_i(d): i\in\mathbb{N}\}$ in increasing order. Define
\[
\operatorname{gap}_d(L):=\min\left\{
\ell_{j+1}(d)-\ell_j(d):
\ell_{j+1}(d)\leq L
\right\}.
\]
Theorem~\ref{thm} and the pigeonhole principle immediately yield an exponentially decreasing upper bound.
\begin{corollary}\label{cor1}
Let $g\geq 6$. For every
\[
d\in\mathcal T_g\setminus\mathcal T_{\mathrm{sing}},
\]
there is a constant $B_d>0$ such that
\[
\operatorname{gap}_d(L)
\leq B_d Le^{-\delta(g)L}
\]
for all sufficiently large $L$.
\end{corollary}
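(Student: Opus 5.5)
Fix $d\in\mathcal T_g\setminus\mathcal T_{\mathrm{sing}}$ and let $L$ be large. The plan is to apply Theorem~\ref{thm} not on all of $[0,L]$ but only on a window $[L-c,L]$ of fixed width, and then use pigeonhole inside that window. The factor $L$ in the bound suggests being generous with the window width.

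First, I would turn Theorem~\ref{thm} into a lower bound for the number of lengths in $[L-c,L]$. Theorem~\ref{thm} gives $N_d^*(L)\ge \tau(d)e^{\delta(g)L}$ for $L\ge L_0$. For an upper bound I would use the prime geodesic theorem: $N_d^*(t)\le \Pi_d(t)\le C_d e^{t}/t$. This upper bound alone does not control $N_d^*(L-c)$ relative to $e^{\delta(g) L}$, because $\delta(g)<1$.

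So I would avoid windows and work directly on $[0,L]$. Write $n=N_d^*(L)$ and list the lengths $\ell_1<\dots<\ell_n\le L$. If every consecutive gap among $\ell_1,\dots,\ell_n$ exceeded $\epsilon$, then $\ell_n-\ell_1>(n-1)\epsilon$. Since $\ell_n-\ell_1\le L$, this gives
\[
\operatorname{gap}_d(L)\le \frac{L}{n-1}.
\]
The minimum in the definition of $\operatorname{gap}_d(L)$ is taken over exactly the pairs with $\ell_{j+1}\le L$, which are these consecutive pairs, so the inequality applies directly.

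Combining this with Theorem~\ref{thm}, for large $L$ we have $n-1\ge \tfrac12\tau(d)e^{\delta(g)L}$. Hence
\[
\operatorname{gap}_d(L)\le \frac{2}{\tau(d)}\,L\,e^{-\delta(g)L},
\]
and we can take $B_d=2/\tau(d)$.

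There is no real obstacle here. The only points to check are that the lengths are genuinely distinct, which holds because $\mathcal L_d^*$ has repetitions omitted, and that $n\ge2$ for large $L$, which follows from the exponential lower bound.
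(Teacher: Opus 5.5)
Your argument is correct and is essentially the paper's own proof, which obtains Corollary~\ref{cor1} directly from Theorem~\ref{thm} and the pigeonhole principle. Placing the $N_d^*(L)\ge\tau(d)e^{\delta(g)L}$ distinct lengths in $[0,L]$ gives $\operatorname{gap}_d(L)\le L/(N_d^*(L)-1)$, hence the bound with $B_d=2/\tau(d)$; the window idea in your opening paragraph is unnecessary and can be dropped.
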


Thus, among the distinct primitive lengths not exceeding $L$, two consecutive values must be exponentially close. The size of gaps in length spectra has been studied from both geometric and Diophantine perspectives. For hyperbolic manifolds defined by algebraic data, arithmetic separation arguments can produce exponential lower bounds for nonzero gaps. In the opposite direction, Dolgopyat and Jakobson proved that arbitrarily small gaps occur for a topologically generic set of hyperbolic and negatively curved metrics \cite{MR3538867}. Corollary~\ref{cor1} gives a pointwise quantitative statement outside the algebraic exceptional set.

Theorem~\ref{thm} also gives quantitative information about multiplicities. Define the \textit{average multiplicity of the primitive length spectrum} up to length $L$ by
\[
\operatorname{Mult}_d(L)
=\frac{\Pi_d(L)}{N_d^*(L)}.
\]
A related, but different, notion was defined in \cite[Section 3.2]{MR1252073}. 

Theorem~\ref{thm} leads to the following.

\begin{corollary}\label{cor}
Let $g\geq6$. For every
\[
d\in\mathcal T_g\setminus\mathcal T_{\mathrm{sing}},
\]
there is a constant $A_d>0$ such that
\[
\operatorname{Mult}_d(L)
\leq A_d\frac{e^{(1-\delta(g))L}}{L}
\]
for all sufficiently large $L$. 
\end{corollary}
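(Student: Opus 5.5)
This corollary follows by dividing the prime geodesic theorem by the lower bound of Theorem~\ref{thm}. Fix $d\in\mathcal T_g\setminus\mathcal T_{\mathrm{sing}}$. By Huber's prime geodesic theorem, $\Pi_d(L)\sim e^L/L$, so there is $L_1$ with
\[
\Pi_d(L)\leq 2\,\frac{e^{L}}{L}\qquad\text{for all }L\geq L_1.
\]
By Theorem~\ref{thm} there are $\tau(d)>0$ and $L_2$ with
\[
N_d^*(L)\geq \tau(d)e^{\delta(g)L}\qquad\text{for all }L\geq L_2.
\]
In particular $N_d^*(L)>0$ for $L\geq L_2$, so the ratio $\operatorname{Mult}_d(L)$ is well defined there.

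For $L\geq\max(L_1,L_2)$ we then get
\[
\operatorname{Mult}_d(L)=\frac{\Pi_d(L)}{N_d^*(L)}\leq \frac{2e^{L}/L}{\tau(d)e^{\delta(g)L}}=\frac{2}{\tau(d)}\cdot\frac{e^{(1-\delta(g))L}}{L}.
\]
This is the claimed bound with $A_d=2/\tau(d)$.

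There is no real obstacle here. The only points to check are that the asymptotic from Huber's theorem is used as an eventual upper bound with some constant (any constant larger than $1$ works), and that the uniformity of $\delta(g)$ passes directly to the exponent $1-\delta(g)$, which lies strictly below $\tfrac12$. All dependence on the metric is absorbed into $A_d$ and the threshold for $L$.
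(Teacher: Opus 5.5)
Your proposal is correct and is the argument the paper intends: the paper gives no separate proof and obtains Corollary~\ref{cor} directly from Theorem~\ref{thm}, by dividing Huber's asymptotic $\Pi_d(L)\sim e^L/L$ by the lower bound $N_d^*(L)\geq\tau(d)e^{\delta(g)L}$, exactly as you do. Your constant $A_d=2/\tau(d)$ works, and so would any constant exceeding the Huber constant divided by $\tau(d)$; this also covers the oriented-versus-unoriented counting convention.
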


Since $\delta(g)>\frac12$, the exponential rate $1-\delta(g)$ in Corollary~\ref{cor} is strictly smaller than $\frac{1}{2}$. This stands in contrast with arithmetic surfaces, where arithmetic restrictions can force exponentially many primitive geodesics to be represented by a much smaller collection of length values. Such multiplicities play an important role in arithmetic quantum chaos through the geometric side of the Selberg trace formula; see, for example, \cite{MR1480112,MR1251251,MR1321639}. Exponential mean multiplicities are also known for certain semi-arithmetic surfaces \cite{MR5041863}.

We now describe the main difficulty in proving Theorem~\ref{thm}. A natural approach is to find a large convex-cocompact subgroup of the surface group and apply orbit counting. This produces exponentially many group elements, but orbit counting by itself does not produce exponentially many distinct lengths: different elements may have the same trace and hence the same translation length. The central problem is therefore to control the multiplicity of the trace map on the family of elements arising from the construction.

There are two essentially different sources of trace coincidence. The first consists of \emph{accidental coincidences}: two elements that do not have the same trace for every hyperbolic structure may nevertheless have the same trace at a particular point of Teichm\"uller space. Since squared traces are rational functions in Fricke coordinates, the coincidence locus of two such elements lies in a proper algebraic subset. The exceptional set $\mathcal T_{\mathrm{sing}}$ is obtained by taking the union of these loci over all non-trace-equivalent pairs.

The second source consists of \emph{universal trace identities}: distinct group elements may have equal squared trace for every hyperbolic structure. Such identities cannot be removed by excluding a subset of Teichm\"uller space. Instead, we analyze the relevant trace-equivalence classes directly. The short splitting construction below cuts off either a one-holed torus or a pair of pants. By applying appropriate Fenchel--Nielsen deformations in these two cases, we prove that the relevant trace-equivalence classes contain at most three elements in the one-holed-torus case and only one element in the pair-of-pants case. Consequently, outside $\mathcal T_{\mathrm{sing}}$, every squared-trace value in the family used in the proof is attained at most three times.

The geometric input is a short splitting multicurve. For every hyperbolic metric $d$ on $\Sigma_g$, we construct a multicurve $\alpha$ of explicitly bounded total length such that
\[
\Sigma_g\setminus\alpha=\Sigma\sqcup\Sigma',
\]
where $\Sigma$ is either a one-holed torus or a pair of pants. In both cases,
\[
\operatorname{area}(\Sigma)=2\pi,
\qquad
\operatorname{area}(\Sigma')=(4g-6)\pi,
\]
and
\[
\ell_d(\alpha)\leq H(g),
\]
where $H(g)$ is explicit.

Let
\[
\phi_d:\pi_1(\Sigma_g)\longrightarrow\SL(2,\mathbb R)
\]
be a lift of the holonomy representation. Set
\[
\Gamma_d'=\phi_d\bigl(\pi_1(\Sigma')\bigr)
\]
and 
\[
X_d'=\Gamma_d'\backslash\mathbf H^2
\]
be the associated complete convex-cocompact hyperbolic surface. Its convex core is isometric to $\Sigma'$, and the boundary of the convex core has total length $\ell_d(\alpha)$. Using the convex core as a test domain in the definition of the Cheeger constant gives
\[
h(X_d')
\leq
\frac{\ell_d(\alpha)}{\operatorname{area}(\Sigma')}
\leq
\frac{H(g)}{(4g-6)\pi}.
\]
The effective Buser inequality proved below therefore yields
\[
\lambda_0(X_d')
\leq
0.682h(X_d')
\leq
\frac{0.682H(g)}{(4g-6)\pi}.
\]
If
\[
\frac{0.682H(g)}{(4g-6)\pi}<\frac{1}{4},
\]
then
\[
\delta(\Gamma_d')\geq\delta(g)>\frac12.
\]
Orbit counting for the convex-cocompact group $\Gamma_d'$ consequently produces exponentially many elements of $\pi_1(\Sigma')$, with an exponential rate at least $\delta(g)$. The trace-multiplicity argument described above then converts this orbit count into the lower bound for the number of distinct primitive lengths.

For every nontrivial
$\gamma\in\pi_1(\Sigma_g)$,
\[
\left|\tr\bigl(\phi_d(\gamma)\bigr)\right|
=
2\cosh\left(\frac{\ell_d(\gamma)}{2}\right).
\]
Thus the change of variables $T\asymp e^{L/2}$ converts the primitive length-set problem into the corresponding problem for absolute traces of primitive elements. Since these trace values form a subset of the full trace set, Theorem~\ref{thm} implies Corollary~\ref{cor: Schmutz}.

\begin{corollary}\label{cor: Schmutz}
    Let $(\Sigma_g,d)$ be a closed surface of genus $g\geq 6$, and write $(\Sigma_g,d)=\Gamma_d\backslash\mathbf H^2$. If the trace set of $\Gamma_d$ has linear growth, then $d\in \mathcal{T}_{\mathrm{sing}}$.
\end{corollary}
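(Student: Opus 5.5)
We argue by contraposition: assuming $d\notin\mathcal T_{\mathrm{sing}}$, we show that the trace set of $\Gamma_d$ cannot have linear growth. Here \emph{linear growth} means, as in Schmutz's formulation, that there is a constant $C>0$ with
\[
\#\bigl\{|t| : t\in\tr(\Gamma_d),\ |t|\le T\bigr\}\le C\,T
\]
for all large $T$. The idea is to compare this linear bound in $T$ with the exponential lower bound of Theorem~\ref{thm}, after converting lengths into traces.

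First, the conversion. Every nontrivial element of $\Gamma_d$ is hyperbolic, since the surface is closed. For a primitive closed geodesic $\gamma$ we have $|\tr\phi_d(\gamma)| = 2\cosh(\ell_d(\gamma)/2)$. The map $\ell\mapsto 2\cosh(\ell/2)$ is strictly increasing on $(0,\infty)$, so distinct values in $\mathcal L_d^*(\Sigma)\cap[0,L]$ give distinct absolute trace values in $[0,\,2\cosh(L/2)]$. Hence, with $T=2\cosh(L/2)\le e^{L/2}+1$,
\[
N_d^*(L)\le \#\bigl\{|t| : t\in\tr(\Gamma_d),\ |t|\le T\bigr\}.
\]
The right-hand side counts absolute values only; if the trace set is counted with signs, this changes it by at most a factor of $2$. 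In either case, linear growth gives $N_d^*(L)\le 2C\,(e^{L/2}+1)$ for all large $L$.

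Second, the contradiction. By Theorem~\ref{thm}, since $d\notin\mathcal T_{\mathrm{sing}}$, we also have $N_d^*(L)\ge \tau(d)e^{\delta(g)L}$ for large $L$. Combining the two bounds gives
\[
\tau(d)e^{\delta(g)L}\le 2C\,(e^{L/2}+1).
\]
Because $\delta(g)>\tfrac12$, the left side grows strictly faster than the right, so this fails for large $L$. Therefore the trace set cannot have linear growth, and so linear growth forces $d\in\mathcal T_{\mathrm{sing}}$.

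There is no real obstacle here. The only point needing care is the normalization: one must check which growth notion (with or without signs, and with or without including non-primitive elements) is meant by linear growth of the trace set. Each of these variants changes the count by at most a constant factor, or only enlarges it, so none affects the exponential comparison. The essential input is the strict inequality $\delta(g)>\tfrac12$ from Theorem~\ref{thm}.
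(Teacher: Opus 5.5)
Your argument is correct and is essentially the paper's own: the paper likewise uses $|\tr\phi_d(\gamma)|=2\cosh(\ell_d(\gamma)/2)$ to convert primitive lengths into absolute traces. It also notes that these values form a subset of the full trace set, and lets the $e^{\delta(g)L}$ lower bound of Theorem~\ref{thm}, with $\delta(g)>\frac12$, contradict the $O(e^{L/2})$ bound forced by linear growth. Your write-up just makes the change of variables $T\asymp e^{L/2}$ and the normalization conventions explicit.
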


For arithmetic Fuchsian groups, trace sets are strongly constrained by the underlying quaternion algebra. Luo and Sarnak proved the bounded-clustering property for cofinite arithmetic Fuchsian groups \cite{MR1266491}. Sarnak subsequently conjectured that bounded clustering characterizes arithmetic Fuchsian groups and that uniform positive separation of traces forces a group to be derived from a quaternion algebra \cite{MR1321639}. Schmutz proposed the stronger conjecture that a Fuchsian lattice whose trace set has linear growth must be arithmetic \cite{MR1394753}. Geninska and Leuzinger established the bounded-clustering conjecture for lattices containing parabolic elements \cite{MR2397884}. More recently, the author proved the linear-growth conjecture for nonuniform lattices and formulated a generic lower-bound problem for cocompact lattices \cite{hao2025trace}. Theorem~\ref{thm} gives the corresponding cocompact result outside an algebraically defined exceptional set.

\medskip

The broad counting strategy follows \cite[Theorem~D]{hao2025trace}: one constructs a short splitting multicurve and counts elements in the complementary subgroup. The present proof differs in two important respects. First, we obtain the splitting estimate using the construction of \cite[Section~2]{MR4905028}, whose underlying geometric idea goes back to Buser; see \cite[Section~5.2]{MR2742784}. Second, and more importantly, we replace the previously asserted pairwise distinctness of the resulting traces by a complete bounded-multiplicity argument. Universal trace identities do occur in the one-holed torus case, but their equivalence classes are uniformly bounded, while all remaining coincidences are confined to the algebraic exceptional set. This is sufficient for the orbit-counting argument and identifies precisely the mechanism by which trace distinctness may fail.

The proof of Theorem~\ref{thm} may be summarized in four steps. First, we construct a nice splitting multicurve $\alpha$ with
\[
\ell_d(\alpha)\leq H(g).
\]
Second, the complementary surface $\Sigma'$ and the Cheeger--Buser estimate give
\[
\delta(\Gamma_d')\geq\delta(g)>\frac{1}{2}.
\]
Third, Fenchel--Nielsen deformations and the algebraicity of trace functions imply that, outside $\mathcal T_{\mathrm{sing}}$, the relevant squared traces have multiplicity at most three. Finally, orbit counting in $\pi_1(\Sigma')$ produces exponentially many group elements, and the multiplicity bound converts this into exponentially many distinct primitive lengths.

The paper is organized as follows. In Section~\ref{sec: Prelimimaries}, we recall the required background. In Section~\ref{Sec: short multicurves}, we construct short splitting multicurves and obtain the explicit bound $H(g)$. In Section~\ref{sec: different traces}, we analyze trace-equivalence classes, define the algebraic exceptional set, and prove the uniform trace-multiplicity bound. Finally, in Section~\ref{Sec: proof}, we combine these ingredients with orbit counting to prove Theorem~\ref{thm}.

\section{Preliminaries}\label{sec: Prelimimaries}
We recall some background here.
\subsection{Hyperbolic geometry}
In this subsection, we recall some results on hyperbolic geometry of surfaces.

\begin{lemma}\cite{MR379833}\label{lemma: collar}
    Let $\gamma$ be a simple closed geodesic on a hyperbolic surface $(\Sigma,d)$. The neighborhood
    \[
    T(\gamma)=\{x\in\Sigma: d(x,\gamma)\leq w(\gamma)\},
    \]
    called the collar of $\gamma$, is isometric to the cylinder $(\rho, t)\in [-w(\gamma),w(\gamma)]\times \mathbb{R}/\mathbb{Z}$ with the metric
    \[
    ds^2=d\rho^2+\ell_d^2(\gamma)\cosh^2(\rho)dt^2,
    \]
    where $w(\gamma)=\sinh^{-1}\left(\frac{1}{\sinh(\frac{1}{2}\ell_d(\gamma))}\right)$.
\end{lemma}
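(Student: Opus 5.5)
The plan is to prove the collar lemma in the universal cover. Let $\ell=\ell_d(\gamma)$. Lift $\gamma$ to a geodesic axis $\tilde\gamma\subset\mathbf H^2$, stabilized by a hyperbolic element $A$ of translation length $\ell$. Introduce Fermi coordinates $(\rho,s)$ about $\tilde\gamma$: $\rho$ is the signed distance to $\tilde\gamma$ and $s$ is arclength along the foot point. In these coordinates the hyperbolic metric is $d\rho^2+\cosh^2(\rho)\,ds^2$. This is a standard computation, for instance in the upper half-plane with $\tilde\gamma$ the imaginary axis. The quotient of the band $\{|\rho|\le w\}$ by $\langle A\rangle$ is then the cylinder $[-w,w]\times\mathbb R/\ell\mathbb Z$ with the metric above. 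Rescaling $s=\ell t$ gives exactly $d\rho^2+\ell^2\cosh^2(\rho)\,dt^2$ with $t\in\mathbb R/\mathbb Z$. So it suffices to show that the projection of this band to $\Sigma$ is injective modulo $\langle A\rangle$. Equivalently, for every $g\in\pi_1(\Sigma)\setminus\langle A\rangle$ the neighborhoods $N_w(\tilde\gamma)$ and $N_w(g\tilde\gamma)$ have disjoint interiors, and moreover the band $N_w(\tilde\gamma)$ contains no lift of a different geodesic.

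Since $\gamma$ is simple, the translates $g\tilde\gamma$ are pairwise disjoint geodesics, and also disjoint at infinity because $\Sigma$ is closed. The key step is to show $d(\tilde\gamma,g\tilde\gamma)\ge 2w(\gamma)$ for every $g\notin\langle A\rangle$. I would use Buser's argument. Let $\eta$ be the common perpendicular between $\tilde\gamma$ and $\tilde\gamma'=g\tilde\gamma$, of length $D$. Translating $\tilde\gamma'$ by $A$ gives another lift $A\tilde\gamma'$, also disjoint from $\tilde\gamma'$. Consider the right-angled hexagon, or rather the quadrilateral, with vertices the two feet of $\eta$ and $A\eta$ on $\tilde\gamma$ (at distance $\ell$ apart) and the corresponding points on $\tilde\gamma'$ and $A\tilde\gamma'$. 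Bisecting by symmetry yields a trirectangle (Lambert quadrilateral) with one side $\ell/2$ along $\tilde\gamma$ and the adjacent side $D$.

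In a Lambert quadrilateral with consecutive sides $a,b$ at the right-angle vertex opposite the acute angle, the sides meeting at the acute vertex are finite only if $\sinh a\sinh b<1$. Otherwise the two far sides diverge, so $\tilde\gamma'$ and $A\tilde\gamma'$ would be ultraparallel without the configuration closing up. The precise form I would use is this: if $\sinh(\ell/2)\sinh D<1$, then $\tilde\gamma'$ meets $A\tilde\gamma'$, or one of them equals its translate. This contradicts simplicity, or forces $\tilde\gamma'=A\tilde\gamma'$, so that $g\in\langle A\rangle$. Hence $\sinh(\ell/2)\sinh D\ge1$, i.e. $D\ge\sinh^{-1}(1/\sinh(\ell/2))$. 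This alone is only $w$, not $2w$. So I would apply the argument symmetrically: the midpoint of $\eta$ gives a perpendicular from the pair. Alternatively, I would run the same Lambert estimate for $\tilde\gamma'$, whose translation length is also $\ell$, with roles exchanged. The cleaner route is the following. If two collars of width $w$ met, there would be a point $x$ within $w$ of both $\tilde\gamma$ and $g\tilde\gamma$, and hence a geodesic loop at the projection of $x$ homotopic to a curve crossing between them. I expect this step, getting the constant to correspond to the half-width $w$ for each side, to be the main obstacle.

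To handle it I would follow Buser's Theorem 4.1.1 directly. He uses the pair of pants containing $\gamma$ as a boundary (after cutting along $\gamma$, extended to a pants decomposition) and decomposes each pants into two right-angled hexagons. In a right-angled hexagon with alternate sides $\ell_1/2,\ell_2/2,\ell_3/2$, the distance $d_{12}$ between the sides of lengths $\ell_1/2$ and $\ell_2/2$ satisfies $\sinh(\ell_1/2)\sinh d_{12}\ge \cosh(\ell_3/2)>1$, by the hexagon cosine formula. Together with a pentagon/trirectangle argument for the distance from side 1 to the opposite side, this shows the $w(\gamma_i)$-neighborhoods of the boundary components inside each hexagon are embedded half-bands and pairwise disjoint. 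Gluing hexagons and pants then gives the embedded cylinder. Since cosh and sinh are monotone, all the inequalities reduce to the single comparison $\sinh(\ell/2)\sinh w=1$, which defines $w(\gamma)$ as stated.
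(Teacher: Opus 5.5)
The paper does not prove this lemma; it quotes it from Keen (see also Buser's Theorem~4.1.1). Your argument therefore has to stand on its own, and it has a genuine gap exactly at the step you flag.

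Your reduction to $\dist(\tilde\gamma,g\tilde\gamma)\ge 2w(\gamma)$ for $g\notin\langle A\rangle$ is correct. Your Lambert-quadrilateral argument, using the reflection across the perpendicular bisector of the feet of $\eta$ and $A\eta$, correctly gives $\sinh(\ell/2)\sinh D\ge 1$, i.e.\ $D\ge w$. But rerunning the estimate with the roles of $\tilde\gamma$ and $g\tilde\gamma$ exchanged bounds the same quantity $D$ by the same number, so the two applications do not add up to $2w$. The geodesic-loop alternative is not carried out. As written, the central estimate is never established.

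Your fallback to Buser does not close the gap either. The inequality $\sinh(\ell_1/2)\sinh d_{12}\ge\cosh(\ell_3/2)$ is false for the distance $d_{12}$ between the sides of lengths $\ell_1/2$ and $\ell_2/2$. If all three alternate sides equal $a$ with $\cosh a=10$, then $\cosh d_{12}=110/99$ and $\sinh a\sinh d_{12}\approx 4.8<10$. The pentagon formula gives that inequality for the distance from the side $\ell_1/2$ to the \emph{opposite} side, which is the ingredient controlling self-overlap of a single collar.

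Moreover, even a bound $\sinh(\ell_1/2)\sinh d_{12}>1$ would only give $d_{12}>w_1$. Disjointness of the $w_1$- and $w_2$-bands requires $d_{12}\ge w_1+w_2$. What is needed is this: with $a_i=\ell_i/2$ one has $\sinh w_i=1/\sinh a_i$ and $\cosh w_i=\coth a_i$, so the hexagon formula gives
\[
\cosh d_{12}=\frac{\cosh a_3+\cosh a_1\cosh a_2}{\sinh a_1\sinh a_2}>\frac{1+\cosh a_1\cosh a_2}{\sinh a_1\sinh a_2}=\cosh(w_1+w_2).
\]

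The same quantity also closes your universal-cover argument directly:
\begin{enumerate}
\item Let $\beta$ and $\beta'$ be the geodesics perpendicular to $\tilde\gamma$ and $g\tilde\gamma$ at distance $\ell/2$ from the two feet of $\eta$, on the same side.
\item Writing $\sigma$ for reflection in a full geodesic, $A^{\pm1}=\sigma_\beta\sigma_\eta$ and $(gAg^{-1})^{\pm1}=\sigma_\eta\sigma_{\beta'}$.
\item Hence $\sigma_\beta\sigma_{\beta'}$ is a nontrivial deck transformation, so it is neither elliptic nor parabolic.
\item Therefore $\beta$ and $\beta'$ are ultraparallel. The resulting right-angled hexagon with consecutive sides $\ell/2,D,\ell/2$ gives $\cosh D>\bigl(1+\cosh^2(\ell/2)\bigr)/\sinh^2(\ell/2)=\cosh 2w$.
\end{enumerate}
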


\begin{theorem}\cite{MR2173363}\label{thm: change of boundary}
    Let $(\Sigma_{g,n},d)$ be a compact hyperbolic Riemann surface with geodesic boundary components $\gamma_1,\gamma_2,\cdots,\gamma_n$, with lengths $\ell_1,\ell_2,\cdots,\ell_n$, respectively. Let $\{\gamma_\nu\}$ be a sequence of simple closed geodesics in the interior of $\Sigma_{g,n}$. For any $\epsilon_k>0$, $k=1,\cdots, n$, there exists a compact hyperbolic Riemann surface $(\Sigma_{g,n},d')$ such that the boundary components of $d'$ are of lengths $\ell_k+\epsilon_k$, $k=1,\cdots,n$, and that $\ell_{d'}(\gamma_\nu)>\ell_d(\gamma_\nu)$ for all $\nu$.
\end{theorem}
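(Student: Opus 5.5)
We would prove Theorem~\ref{thm: change of boundary} by a strip-insertion (grafting) deformation along the boundary. The lengthened boundary is produced by cutting the surface along disjoint simple geodesic arcs and gluing in thin hyperbolic pieces. The interior geodesics then get longer because of a comparison argument on the gluing maps.

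\textbf{Step 1: the arcs.} First decompose $(\Sigma_{g,n},d)$ along a maximal family of pairwise disjoint simple geodesic arcs $\eta_1,\dots,\eta_m$ orthogonal to the boundary. Choose them so that every boundary component $\gamma_k$ meets at least one arc, and so that the complement is a union of right-angled hyperbolic hexagons. Each $\gamma_k$ is then cut into segments, and we will lengthen these segments.

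\textbf{Step 2: the deformation.} Next we replace the hexagons. We will not insert strips along the $\eta_j$, since these are arcs with endpoints on the boundary. Instead, each right-angled hexagon is replaced by a new right-angled hexagon with the same three alternate "arc" sides, whose lengths are the widths across the $\eta_j$. The three "boundary" sides are strictly longer, chosen so that on each $\gamma_k$ the total increase is exactly $\epsilon_k$. This works because a right-angled hexagon is determined by any three alternate side lengths. So we may instead fix the boundary-side lengths $b_i+\delta_i$, with small $\delta_i>0$ summing to $\epsilon_k$ on $\gamma_k$, and let the arc sides adjust. To keep the gluing consistent, it is cleaner to use the pants-decomposition version. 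Take a pants decomposition containing the boundary and keep all interior Fenchel--Nielsen lengths and twists fixed. Each pair of pants is decomposed into two right-angled hexagons, and within each pair of pants we increase the boundary lengths adjacent to $\gamma_k$.

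\textbf{Step 3: the comparison.} The key step is to construct a $1$-Lipschitz map $F$ from $(\Sigma,d')$ to $(\Sigma,d)$ in the correct homotopy class, which is strictly contracting on a set every closed geodesic crosses essentially. For right-angled hexagons with one side enlarged and the opposite alternate sides fixed, there is a natural contracting map. It is obtained by the orthogonal projection structure (Fermi coordinates along the common perpendiculars), using the collar-type metric $d\rho^2+\cosh^2\rho\,dt^2$ as in Lemma~\ref{lemma: collar}. Concretely, the new hexagon is built by inserting a thin "strip" bounded by two geodesics orthogonal to the boundary side. Collapsing this strip by the map $(\rho,t)\mapsto(\rho,\phi(t))$, with $\phi$ monotone, is distance non-increasing. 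Any interior closed geodesic $\gamma_\nu$ must cross an inserted strip transversally. Otherwise $\gamma_\nu$ would lie in the complement of all strips, which deformation retracts into a region carrying no essential interior curves. Hence $F(\gamma_\nu)$ has length strictly less than $\ell_{d'}(\gamma_\nu)$, and so $\ell_d(\gamma_\nu)<\ell_{d'}(\gamma_\nu)$. The strictness holds uniformly for each $\nu$, and since the deformation is a single surface, the countable sequence poses no additional issue.

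\textbf{Main obstacle.} The obstacle is Step 3. We must arrange the strips, which run from a boundary component back to the boundary, so that the glued surface is still hyperbolic with geodesic boundary of exactly the right lengths. We must also guarantee that every closed geodesic meets some strip essentially. We would first try the simplest arrangement: for each $k$, one arc $\eta$ from $\gamma_k$ to itself or to another boundary component. We would insert a strip of small width along $\eta$, whose two sides are geodesic arcs orthogonal to the boundary. We would then check that the resulting right angles make the new boundary geodesic, with length increased by exactly the strip's boundary widths.

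Any closed geodesic disjoint from all these arcs lives in the complement, which may contain essential curves. To fix this, we choose the arcs to fill the surface together with the boundary, so that the complement is a union of discs. This is possible with finitely many arcs. The width parameters are then solved for so that the boundary increments equal $\epsilon_k$.
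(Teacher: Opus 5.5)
The paper gives no proof of this statement; it quotes it from \cite{MR2173363}. Judged on its own, your argument does not yet go through.

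Step~2 does not define a deformation. A right-angled hexagon is determined by three alternate sides, so you cannot keep the arc sides fixed and lengthen the boundary sides. If you prescribe the boundary sides instead, the arc sides change hexagon by hexagon and no longer agree across the arcs $\eta_j$ along which the hexagons are glued. The fallback, fixing all interior Fenchel--Nielsen lengths and twists, keeps every interior pants curve at exactly its old length. So $\ell_{d'}(\gamma_\nu)>\ell_d(\gamma_\nu)$ fails for those curves, which may well be in the sequence (the paper applies the theorem to all essential simple closed curves). No comparison is offered for the other curves either. The ``natural contracting map'' at the start of Step~3 has a similar problem. Enlarging one alternate side of a right-angled hexagon while fixing the other two shortens the two sides adjacent to it. Hence there is no side-preserving $1$-Lipschitz map from the new hexagon to the old one.

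The real gap is the point you flag as the main obstacle, and the check you propose there fails. A strip inserted along an arc cannot leave the boundary geodesic. The cut surface has angle $\pi/2$ on each side of each endpoint of $\eta$. A piece making the new boundary geodesic would therefore be a geodesic quadrilateral with four right angles, whose area $2\pi-4\cdot\frac{\pi}{2}$ is zero. For a non-orthogonal arc the required angles at each end still sum to $\pi$, giving the same contradiction. Concretely, in Fermi coordinates about $\gamma_k$ your strip $\{0\le\rho\le\ell(\eta),\ t_0\le t\le t_0+w\}$ ends on the hypercycle $\rho=\ell(\eta)$, which has geodesic curvature $\tanh\ell(\eta)\neq0$. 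So the deformed surface only has locally convex boundary, the boundary increments are not the widths, and ``solving for the widths'' is not available as stated.

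Your collapse estimate and filling argument are sound; this is Thurston's strip deformation. Two things are missing:
\begin{enumerate}[label=(\roman*)]
\item Pass to the compact subsurface bounded by the geodesic representatives of the boundary curves. It has the same interior closed geodesics, so the collapse map still gives strict lengthening there.
\item Reach the exact lengths by a continuity argument. First use a filling arc system with widths so small that every boundary increment is below $\epsilon_k$. Then, for each $k$, insert one more strip along an arc from $\gamma_k$ to itself. This changes no other boundary length and shortens nothing. Its width can be chosen by the intermediate value theorem, since the length of $\gamma_k$ varies continuously from its current value to $+\infty$ (each crossing of the strip costs at least $w$).
\end{enumerate}
Without a step like (ii), the prescribed values $\ell_k+\epsilon_k$ are not reached. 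The $\epsilon_k$ are arbitrary rather than small, and an arc joining two different boundary components changes both lengths at once.
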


The \textit{systole} of a closed hyperbolic surface $(\Sigma,d)$, denoted by $\operatorname{sys}(d)$, is the length of its shortest closed geodesic. Define 
\[
\operatorname{sys}(g)=\max\{\operatorname{sys}(d): d\in\mathcal{T}_g\}.
\]

Bavard's bound \cite{MR1413853} states that  
\begin{equation}\label{eq: Bavard bounds}
\operatorname{sys}(g)\leq 2\cosh^{-1}\left(\frac{1}{2\sin(\frac{\pi}{12g-6})}\right) \leq 2\ln g+2.681.
\end{equation}
Recent work improves this bound \cite{bourque2023linear}.

By \cite{MR749104},
\[
\operatorname{sys}(2)=2\cosh^{-1}(1+\sqrt2)>2
\]
is realized by the Bolza surface.
For every \(g\geq2\), the Bolza surface admits a connected cover of
degree \(g-1\), which has genus \(g\). Since the systole does not
decrease under coverings, it follows that
\[
\operatorname{sys}(g)\geq\operatorname{sys}(2)>2.
\]

We will also use the right-angled pentagon and hexagon formulas \cite[Equations (2.6.10) and (2.6.17)]{MR1435975}.
\subsection{Critical exponent and orbit counting}
Let $\Gamma$ be a Fuchsian group, and fix a point $o\in {\mathbf{H}}^2$. The \textit{critical exponent} of $\Gamma$ is the abscissa of convergence of the \textit{Poincar\'e series} of $\Gamma$:
\[ 
P_\Gamma(s)=\sum_{\gamma\in \Gamma}e^{-s\dist_{\mathbf{H}^2}(o,\gamma o)},
\]
where $\dist_{\mathbf{H}^2}$ is the hyperbolic metric on ${\mathbf{H}}^2$.

The critical exponent is closely related to the \textit{orbit counting function} of the group; in particular, we have:
\begin{theorem}\cite{MR948288}\label{Thm: critical}
   Suppose $\Gamma$ is a convex cocompact subgroup of $\PSL(2,\mathbb{R})$. Then there exists $c>0$ and a $\Gamma$-invariant continuous function $F: {\mathbf{H}}^2\to \mathbb R^+$, such that 
   \[
   \#\{\gamma\in\Gamma\mid \dist_{\mathbf{H}^2}(x,\gamma x')\leq R\}\sim cF(x)F(x')e^{\delta_\Gamma R}
   \]
   for all $x$, $x'\in{\mathbf{H}}^2$.
\end{theorem}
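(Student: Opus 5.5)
This statement is Lalley's orbit-counting theorem for convex cocompact Fuchsian groups, cited from \cite{MR948288}. The paper invokes it as a black box rather than proving it. If I had to reconstruct the argument, I would follow the symbolic-dynamics route.

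The first step is to code the action. A convex cocompact $\Gamma\subset\PSL(2,\mathbb R)$ is free-by-finite, so its limit set $\Lambda$ carries a Markov (Bowen--Series type) coding. This coding conjugates the boundary action to a subshift of finite type $(\Sigma_A,\sigma)$. Choosing generators, each element $\gamma$ then corresponds to an admissible word $\omega_1\cdots\omega_n$.

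The second step is to write orbit distances as Birkhoff sums. For a word, write $\gamma=g_{\omega_1}\cdots g_{\omega_n}$. Then $\dist(x,\gamma x')$ equals a Birkhoff sum $S_nf(\omega)$ of a H\"older potential $f$, built from Busemann cocycles, plus an error. That error depends only on the first and last letters and on $x,x'$, and it converges exponentially as the word is extended. This is where the functions $F(x)$ and $F(x')$ will eventually come from: through the eigenfunction and eigenmeasure of the transfer operator, evaluated at the endpoints.

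The third step is a renewal argument. Set $N(R)=\#\{\gamma:\dist(x,\gamma x')\le R\}$ and take its Laplace transform, which is essentially the Poincar\'e series $P_\Gamma(s)$. This is expressed through the resolvent $\sum_n\mathcal L_{-sf}^n$ of the Ruelle transfer operator. The pressure $P(-sf)$ vanishes exactly at $s=\delta_\Gamma$, giving a simple pole there with residue $c\,F(x)F(x')$. A Tauberian or renewal theorem then yields $N(R)\sim cF(x)F(x')e^{\delta_\Gamma R}$. Continuity and $\Gamma$-invariance of $F$ follow from the construction, since $F$ is expressed via Patterson--Sullivan measure of the whole limit set viewed from $x$.

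The main obstacle is the non-lattice (aperiodicity) condition. One must show that $f$ is not cohomologous to a function valued in a discrete subgroup of $\mathbb R$, since otherwise only a lattice renewal theorem applies and no pure asymptotic holds. I would argue this by observing that periodic orbit sums give lengths of closed geodesics. For a nonelementary Fuchsian group the length spectrum is not contained in a discrete subgroup, because traces of $g$, $h$, $gh$ vary independently, so $f$ is non-lattice. Applying the renewal theorem then completes the proof.
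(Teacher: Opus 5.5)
The paper does not prove this statement; it only quotes it from the literature, so your outline has to stand on its own. The architecture is the standard symbolic route, as in Lalley's work: a Markov coding, $\dist(x,\gamma x')$ written as a Birkhoff sum of a H\"older potential $f$ plus a boundary correction, the Poincar\'e series expressed through $\sum_n\mathcal L_{-sf}^n$, a simple pole at $s=\delta_\Gamma$, and then a renewal or Tauberian theorem. That architecture is fine.

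\textbf{The gap.} The one step you single out as the crux, non-arithmeticity, is not actually proved. Saying that \emph{the traces of $g,h,gh$ vary independently} has no content for a fixed group, where these traces are specific numbers. At best it suggests non-arithmeticity for a generic $\Gamma$, not for a given one. What works is an asymptotic argument:
\begin{enumerate}
\item Conjugate so that $g=\mathrm{diag}(\lambda,\lambda^{-1})$ with $\lambda>1$.
\item Using discreteness and non-elementarity, choose a hyperbolic $h=\begin{pmatrix}a&b\\ c&d\end{pmatrix}\in\Gamma$ with no fixed point in common with $g$. Then $bc\neq0$, and after replacing $h$ by $h^2$ if necessary, also $a\neq0$.
\item Solving $2\cosh(\ell(g^nh)/2)=|\lambda^na+\lambda^{-n}d|$ for large $n$ gives
\[
\ell(g^nh)-n\ell(g)-2\ln|a|=\frac{2bc}{a^2}\,\lambda^{-2n}+O(\lambda^{-4n}),
\]
which is nonzero for large $n$ and tends to $0$.
\item If every length lay in some $\tau\mathbb Z$, the left side would lie in the closed discrete set $\tau\mathbb Z-2\ln|a|$. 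It would therefore eventually equal its limit $0$, a contradiction.
\end{enumerate}
Since periodic-orbit sums of $f$ are exactly translation lengths, this is what rules out $f$ being cohomologous to a $\tau\mathbb Z$-valued function.

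\textbf{Further corrections.}
\begin{itemize}
\item \emph{Cocompact lattices.} Convex cocompact does not imply virtually free: cocompact lattices satisfy the hypothesis. For them you need a different coding (Bowen--Series, or an automatic structure), or simply the classical lattice-point theorem with $\delta_\Gamma=1$ and $F$ constant.
\item \emph{Irreducibility of the coding.} The Ruelle--Perron--Frobenius and renewal step needs the coding of group elements to be irreducible, and this is not automatic for general codings. For the group actually used in the paper there is no problem: $\Gamma'_d$ is free of rank $2g-2$, and reduced words in a free basis give a mixing shift.
\item \emph{Non-elementary hypothesis.} The statement tacitly requires $\Gamma$ to be non-elementary. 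For $\Gamma=\langle g\rangle$ one has $\delta_\Gamma=0$ while the count grows linearly in $R$. Your argument uses non-elementarity both for the coding and for non-arithmeticity, so you should state it as a hypothesis.
\end{itemize}

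\textbf{An alternative route.} The paper applies the theorem only when $\delta(\Gamma'_d)>\tfrac12$. In that regime a spectral proof in the style of Lax--Phillips gives the asymptotic directly, with $F$ the positive $L^2$ ground state of eigenvalue $\delta(1-\delta)$. That route avoids the non-arithmeticity issue altogether.
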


We also discuss the relationship between $\lambda_0$ of ${\mathbf{H}}^2/\Gamma$ and the critical exponent $\delta_\Gamma$ of a Fuchsian group $\Gamma$, where $\lambda_0$ is the bottom of the spectrum of the Laplace-Beltrami operator.

\begin{thm}\cites{MR360472, MR360473, MR450547}\label{cheeger}
    Let $\Gamma$ be a Fuchsian group. Then 
    \[
    \lambda_0({\mathbf{H}}^2/\Gamma)=\left\{\begin{array}{lc}
    \frac{1}{4}&\delta_\Gamma<\frac{1}{2},\\
    \delta_\Gamma(1-\delta_\Gamma)& \delta_\Gamma\geq \frac{1}{2}.
    \end{array}
    \right.
    \]
\end{thm}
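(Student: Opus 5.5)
The statement is the Elstrodt--Patterson--Sullivan theorem. I would prove it by establishing a lower and an upper bound for $\lambda_0(X)$, $X=\mathbf H^2/\Gamma$, separately, using two classical tools: the Barta / Agmon--Allegretto--Piepenbrink principle, and Rayleigh quotients of truncated Poincar\'e series. The Barta principle says that a positive $C^2$ function $f$ with $-\Delta f\geq \lambda f$ forces $\lambda_0\geq\lambda$.

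\emph{Lower bound.} On $\mathbf H^2$ write $r=\dist_{\mathbf H^2}(o,\cdot)$. For $0<s\leq 1$ one computes
\[
\Delta e^{-sr}=\bigl(s^2-s\coth r\bigr)e^{-sr}\leq -s(1-s)e^{-sr}\qquad(r>0).
\]
For $s>\delta_\Gamma$ the Poincar\'e series $f_s(x)=\sum_{\gamma\in\Gamma}e^{-s\dist(x,\gamma o)}$ converges locally uniformly, and derivatives can be taken termwise. Hence $f_s$ descends to $X$ and satisfies $-\Delta f_s\geq s(1-s)f_s$ away from the orbit $\Gamma o$. I would remove the cone singularity at the orbit by replacing $e^{-sr}$ with a radial function that is smooth near $r=0$. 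Choosing $o$ appropriately, or letting the smoothing radius shrink, makes the resulting loss in the inequality arbitrarily small. Barta then gives $\lambda_0\geq s(1-s)$ for every $s\in(\delta_\Gamma,1]$. Letting $s\downarrow\delta_\Gamma$ when $\delta_\Gamma\geq\frac12$ gives $\lambda_0\geq\delta_\Gamma(1-\delta_\Gamma)$. Taking $s=\frac12$ when $\delta_\Gamma<\frac12$ gives $\lambda_0\geq\frac14$.

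\emph{Upper bound $\lambda_0\leq\frac14$.} This holds in all cases, because the bottom of the spectrum cannot increase when passing to a quotient: $\lambda_0(X)\leq\lambda_0(\mathbf H^2)=\frac14$. Concretely, I would push forward radial test functions $e^{-r/2}\chi_R(r)$ supported in large balls. Alternatively, I would use test functions on a Dirichlet fundamental domain, noting that the volume of a ball of radius $R$ in $X$ grows at most like $e^{R}$. This settles the case $\delta_\Gamma<\frac12$.

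\emph{Upper bound $\lambda_0\leq\delta_\Gamma(1-\delta_\Gamma)$ when $\delta_\Gamma\geq\frac12$.} This is the step I expect to be the main obstacle. My first attempt would use the functions $f_s$, $s\downarrow\delta_\Gamma$, as test functions. If $\Gamma$ is of divergence type, or after Patterson's slowly-varying modification of the weights, one has $\sum_\gamma e^{-s\dist(o,\gamma o)}\to\infty$ as $s\downarrow\delta_\Gamma$. One expects $\|f_s\|_{L^2(X)}\to\infty$ as well, since $2s>1$ makes $f_s$ square integrable only marginally near the limit set. Integrating by parts on a Dirichlet domain, the identity $\Delta e^{-sr}=(s^2-s\coth r)e^{-sr}$ shows that
\[
\int_X|\nabla f_s|^2-s(1-s)\int_X f_s^2
\]
differs from zero only by contributions from the region where $\coth r$ is not close to $1$, i.e.\ near the orbit, together with cutoff boundary terms. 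These errors stay bounded while $\|f_s\|_2^2\to\infty$, so the Rayleigh quotient tends to $\delta_\Gamma(1-\delta_\Gamma)$. The delicate points are the cross terms between different orbit points and checking that $f_s\in L^2$ for $s>\delta_\Gamma$. If this direct estimate proves too messy, my fallback is Sullivan's route. There one builds the Patterson--Sullivan measure $\mu$ and the positive $\delta_\Gamma(1-\delta_\Gamma)$-eigenfunction $\phi(x)=\int P(x,\xi)^{\delta_\Gamma}d\mu(\xi)$, and shows that suitable truncations of $\phi$ have Rayleigh quotients converging to $\delta_\Gamma(1-\delta_\Gamma)$; when $\delta_\Gamma>\frac12$ one shows $\phi$ is in fact the ground state. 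Combining the three bounds yields the stated formula.
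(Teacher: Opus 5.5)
The paper gives no proof of this statement; it only quotes Elstrodt and Patterson, so your proposal has to stand on its own. Your lower bound is fine. The cone point is in fact harmless, because $-\Delta e^{-sr}=(s\coth r-s^2)e^{-sr}$ has a \emph{positive} $s/r$ singularity there, so $f_s$ is already a weak supersolution. Your bound $\lambda_0\le\frac14$ via volume growth is also fine: $e^{-a r_X}$, with $r_X$ the distance to a base point of $X$ and $a>\frac12$, lies in $L^2(X)$ and has Rayleigh quotient $a^2$. (The push-forward variant does not control the gradient cross terms.)

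\textbf{The gap.} It is in the step you flagged, $\lambda_0\le\delta_\Gamma(1-\delta_\Gamma)$ for $\delta_\Gamma>\frac12$. Your mechanism, bounded errors against a diverging $\|f_s\|_2$, is false in exactly the regime you put yourself in: divergence type, or Patterson's reweighting, whose whole purpose is to make the series diverge. Unfolding gives
\[
\int_X|\nabla f_s|^2-s(1-s)\int_X f_s^2
=s\int_{\mathbf H^2}\bigl(\coth\dist(x,o)-1\bigr)e^{-s\dist(x,o)}f_s(x)\,dx .
\]
We have $e^{-s\dist(x,o)}P_\Gamma(s)\le f_s(x)\le e^{s\dist(x,o)}P_\Gamma(s)$. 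So $f_s$ blows up uniformly on compact sets as $s\downarrow\delta_\Gamma$, and the region near the orbit is not negligible. The defect is therefore at least $c\,P_\Gamma(s)\to\infty$.

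\textbf{How to repair it.} What rescues the approach is a comparison of growth rates, which your sketch lacks. Since $\int_{\mathbf H^2}(\coth\dist(x,o)-1)\,dx=2\pi$, the upper estimate for $f_s$ bounds the defect by $2\pi s\,P_\Gamma(s)$. Restricting to the image of $B(o,1)$ in $X$ gives $\|f_s\|_2^2\ge c'\,P_\Gamma(s)^2$. Hence the Rayleigh quotient of $f_s$ is $s(1-s)+O(1/P_\Gamma(s))$, which settles the divergence-type case. With Patterson weights $h$ you must also show that the extra terms $(2s-\coth r)\frac{h'}{h}-\bigl(\frac{h'}{h}\bigr)^2-\bigl(\frac{h'}{h}\bigr)'$ contribute only $o(\|f_s\|_2^2)$. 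That requires a $C^2$ choice of $h$ with $h'/h\to0$ and $(h'/h)'\to0$.

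Your fallback does not close the gap either. Through Barta, the Patterson--Sullivan eigenfunction gives only the \emph{lower} bound. The upper bound needs it to lie in $L^2(X)$. That holds for geometrically finite $\Gamma$ with $\delta_\Gamma>\frac12$, including the convex cocompact $\Gamma_d'$ used in this paper. It fails in general, e.g.\ for a $\mathbb Z$-cover of a closed surface, where $\delta_\Gamma=1$ and $X$ has infinite area.

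A route that works uniformly is the classical resolvent argument. For $\lambda<\lambda_0(X)$ the Green function $\int_0^\infty e^{\lambda t}p^X_t(x,y)\,dt$ is finite. Unfolding the heat kernel writes it as $\sum_\gamma G^{\mathbf H^2}_\lambda(x,\gamma y)$, where $G^{\mathbf H^2}_\lambda(r)\asymp e^{-s_\lambda r}$ with $s_\lambda\ge\frac12$ and $s_\lambda(1-s_\lambda)=\lambda$. Convergence of this sum forces $s_\lambda\ge\delta_\Gamma$, i.e.\ $\lambda\le\delta_\Gamma(1-\delta_\Gamma)$.
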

This result has been generalized to discrete subgroups of $\mathrm{PSO}(n,1)$ in \cite{MR1074486}.

\subsection{Cheeger constant and spectral estimates}
Introduced by Cheeger \cite{MR402831} in 1970, the \textit{Cheeger constant} is a crucial invariant in geometry and spectral theory. For any compact $n$-dimensional Riemannian manifold, define the Cheeger constant of $M$ by
\[
h(M)=\inf\frac{\mathrm{Vol}(\partial A)}{\mathrm{Vol}(A)},
\]
where $A$ runs over all open subsets with $\mathrm{Vol}(A)\leq \frac{1}{2}\mathrm{Vol}(M)$. Cheeger \cite{MR402831} showed $h(M)$ provides a lower bound of $\lambda_0$:
\[
\lambda_0(M)\geq \frac{1}{4}h^2(M).
\]
Cheeger's inequality also holds for noncompact Riemannian manifolds, provided that $h$ is defined over all $A$ such that $A\cup\partial A$ is compact.

Indeed, $h(M)$ also gives an upper bound of $\lambda_0$.
\begin{theorem}\cite[Theorem 7.1]{MR683635}\label{buser}
    There exists a constant $\nu$ such that for any noncompact hyperbolic surface $\Sigma$,
    \[
    \lambda_0(\Sigma)\leq \nu h(\Sigma).
    \]
\end{theorem}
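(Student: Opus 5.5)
The plan is Buser's classical argument: construct a test function from a near-optimal Cheeger set and bound its Rayleigh quotient. The constant $\nu$ will depend only on the curvature $-1$ and the dimension $2$. Since $\Sigma$ is noncompact, $\lambda_0(\Sigma)=\inf \int|\nabla f|^2/\int f^2$ over compactly supported Lipschitz $f$. Fix $\epsilon>0$ and choose a domain $A$ with $A\cup\partial A$ compact and $\mathrm{Vol}(\partial A)/\mathrm{Vol}(A)\le h+\epsilon$, where $h=h(\Sigma)$. We may assume $h$ is small, say $h\le h_0$ for a fixed $h_0$. Otherwise the bound $\lambda_0\le 1/4$ gives $\lambda_0\le h/(4h_0)$ directly. (That $\lambda_0\le1/4$ for any hyperbolic surface follows, for instance, from Theorem~\ref{cheeger}.)

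The test function will be a cutoff near the boundary. Set $f(x)=\min\{1,\dist(x,\Sigma\setminus A)/r\}$ for a fixed small radius $r>0$, chosen uniformly (e.g.\ $r=1$). Then $|\nabla f|\le 1/r$, and $\nabla f$ is supported in the collar $U_r=\{x\in A:\dist(x,\partial A)<r\}$. This gives
\[
\frac{\int|\nabla f|^2}{\int f^2}\le \frac{\mathrm{Vol}(U_r)/r^2}{\mathrm{Vol}(A)-\mathrm{Vol}(U_r)}.
\]
It therefore suffices to show $\mathrm{Vol}(U_r)\le C\,r\,\mathrm{Vol}(\partial A)$, with $C$ uniform, after possibly modifying $A$ without increasing its isoperimetric ratio by more than a constant factor. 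Then for small $h$ the denominator is at least $\tfrac12\mathrm{Vol}(A)$, and the quotient is $O(h)$.

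The main obstacle is that the volume of a tubular neighbourhood of an arbitrary rectifiable boundary is not controlled by its length: wiggly boundaries have tiny neighbourhoods relative to length, which helps, but thin fingers and spikes hurt the estimate in the other direction. Buser resolves this by first regularizing $A$. I would follow his approach and cover $\Sigma$ by a maximal $r$-separated set of balls; in curvature $-1$ balls of radius $r$ have uniformly comparable areas, and the covering multiplicity is bounded. Then replace $A$ by the union $A'$ of those balls $B$ with $\mathrm{Vol}(A\cap B)\ge\tfrac12\mathrm{Vol}(B)$. The local relative isoperimetric inequality in a hyperbolic ball of radius $2r$ bounds the area of the symmetric difference in each boundary ball by $C\,\mathrm{Vol}(\partial A\cap 2B)$. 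Summing over balls with bounded overlap gives $\mathrm{Vol}(A\triangle A')\le C\,\mathrm{Vol}(\partial A)$. It also shows that the number of balls meeting $\partial A'$ is at most $C\,\mathrm{Vol}(\partial A)$, because each such ball meets both $A$ and its complement substantially. Consequently $\mathrm{Vol}(U_r(A'))\le C'\mathrm{Vol}(\partial A)$, and for $h$ small we have $\mathrm{Vol}(A')\ge\tfrac12\mathrm{Vol}(A)$.

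Finally, run the test-function argument on $A'$: $\lambda_0\le C''\mathrm{Vol}(\partial A)/\mathrm{Vol}(A)\le C''(h+\epsilon)$. Letting $\epsilon\to0$ and combining with the large-$h$ case gives $\nu=\max(C'',1/(4h_0))$. Injectivity radius does not enter, because all estimates are done in the universal cover via $\Gamma$-equivariant local balls and use only the curvature bound.
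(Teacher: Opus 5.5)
Your strategy (regularize a near-optimal Cheeger set using a covering by $r$-balls, then test with a distance cutoff) is Buser's original one. However, the regularization step fails in the thin part as you justify it, and the thin part is exactly where uniformity of $\nu$ over \emph{all} noncompact hyperbolic surfaces is at stake.

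Balls of radius $r$ in $\Sigma$ do not have uniformly comparable areas. Only the upper bound $\mathrm{Vol}(B(x,r))\le 2\pi(\cosh r-1)$ is uniform. A ball of radius $1$ centred on a horocycle of length $\epsilon$ in a cusp, or on a closed geodesic of length $\epsilon$, has area of order $\epsilon$. Consequently your count ``the number of balls meeting $\partial A'$ is at most $C\,\mathrm{Vol}(\partial A)$'' is false. Take a sphere with $k$ cusps and one funnel whose boundary geodesic has length $L$, and let $A$ be the convex core truncated along horocycles of length $\epsilon$. For $\epsilon$ small, every cusp contains a ball of your cover meeting $\partial A'$, so at least $k$ balls do. Meanwhile $\mathrm{Vol}(\partial A)=L+k\epsilon$ can be much smaller than $k$.

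Your closing remark does not repair this. A $\Gamma$-invariant $r$-separated net cannot enter the region where some element of $\Gamma$ displaces points by less than $r$. If instead you lift a ball of $\Sigma$ to $B_{\mathbf{H}^2}(\tilde x,r)$, areas and lengths get weighted by the covering multiplicity $y\mapsto\#\bigl(\pi^{-1}(y)\cap B_{\mathbf{H}^2}(\tilde x,r)\bigr)$. This weight is nonconstant and unbounded, so the relative isoperimetric inequality of the hyperbolic disc does not descend to balls of $\Sigma$.

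The fix is to use only what holds under a Ricci lower bound, and to bookkeep areas rather than numbers of balls.
\begin{itemize}
\item Bishop--Gromov relative volume comparison gives doubling and bounded overlap of $\{B(x_i,5r)\}$, with no injectivity-radius assumption.
\item Buser's local isoperimetric lemma gives the relative isoperimetric inequality on balls of $\Sigma$ itself, with constants depending only on $\operatorname{Ric}\ge-1$ and the radius. This is the main technical lemma of Buser's paper and must be quoted or proved in this form.
\end{itemize}
With these two tools your symmetric-difference estimate is fine as written, since it is already area-based.

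For the collar $U_r(A')$, argue as follows. Suppose a good ball $B_i$ (one with at least half its area in $A$) contains $x\in U_r(A')$. Then some $z\notin A'$ with $\dist(x,z)<2r$ lies in a bad ball $B_k\subset B(x_i,5r)$. By doubling, both $A$ and its complement occupy at least $c\,\mathrm{Vol}(B(x_i,5r))$ of $B(x_i,5r)$. Hence $\mathrm{Vol}(B_i)\le Cr\,\mathrm{Vol}\bigl(\partial A\cap B(x_i,5r)\bigr)$. Summing with bounded overlap gives $\mathrm{Vol}(U_r(A'))\le C'\mathrm{Vol}(\partial A)$ directly.

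For comparison, the paper does not prove this statement; it quotes it from Buser. Its effective version, Lemma~\ref{lemma: better kappa}, instead applies the heat-semigroup Buser inequality \cite[Theorem 1.1]{MR4225812}, which needs only $\operatorname{Ric}\ge-1$, together with $\lambda_0\le\frac14$. That route involves no covering or regularization, so the thin part never arises, and it gives the explicit constant $0.682$. The repaired geometric argument gives only an inexplicit $\nu$.
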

 We require the following effective form of this estimate.

\begin{lemma}\label{lemma: better kappa}
     For an infinite-volume complete hyperbolic surface, the $\nu$ in Theorem~\ref{buser} may be taken to be 0.682.
\end{lemma}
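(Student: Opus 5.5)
The plan is a quantitative version of Buser's argument \cite[Theorem 7.1]{MR683635}, combined with a trivial bound in the regime where $h(\Sigma)$ is large.

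\textbf{Reduction to small $h$.} Let $\Sigma$ be a complete infinite-volume hyperbolic surface. Its bottom of spectrum satisfies $\lambda_0(\Sigma)\le \frac14$: by Theorem~\ref{cheeger}, $\lambda_0=\frac14$ or $\lambda_0=\delta(1-\delta)\le\frac14$. Hence the inequality $\lambda_0\le 0.682\,h$ is automatic whenever
\[
h(\Sigma)\ge h_0:=\frac{1}{4\cdot 0.682}\approx 0.3666 .
\]
It therefore suffices to treat the case $h(\Sigma)<h_0$.

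\textbf{Choice of test set and test function.} Fix $\varepsilon>0$ and a relatively compact open set $A$ with smooth boundary such that $|\partial A|\le (h+\varepsilon)|A|$. For a profile $\psi:[0,\infty)\to[0,1]$ with $\psi(0)=0$ and $\psi\equiv1$ on $[t,\infty)$, set $f(x)=\psi(\operatorname{dist}(x,\partial A))$ for $x\in A$ and $f=0$ outside $A$. This $f$ is Lipschitz with compact support, so it is admissible in the Rayleigh quotient. By the coarea formula,
\[
\int|\nabla f|^2=\int_0^t \psi'(s)^2\,L(s)\,ds,\qquad \int f^2\ \ge\ |A|-\int_0^t\bigl(1-\psi(s)^2\bigr)L(s)\,ds,
\]
where $L(s)$ is the length of the inner parallel curve at distance $s$ from $\partial A$.

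\textbf{Key geometric input.} The crucial estimate is the comparison $L(s)\le |\partial A|\cosh s$, or at least a bound of the form $L(s)\le|\partial A|\,\cosh s+ C(s)$ in which $C(s)$ is controlled. This follows from curvature $-1$ via the first-variation and Gauss--Bonnet argument, exactly as in Buser's proof: the turning of the parallel curves is controlled by the area they sweep out, and they may only lose length where the cut locus is met. Inserting this bound and writing $h$ for $|\partial A|/|A|$ gives
\[
\lambda_0\ \le\ h\cdot\frac{\int_0^t\psi'^2\cosh s\,ds}{1-h\int_0^t(1-\psi^2)\cosh s\,ds}.
\]

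\textbf{Optimization, the main obstacle.} One then optimizes over the profile $\psi$ and over $t$, subject to $h<h_0$. The naive linear profile $\psi(s)=s/t$ yields a constant of order $0.93$, which is not good enough. I would therefore take $\psi$ to be the optimizer of the one-dimensional weighted variational problem with weight $\cosh s$, namely a solution of $(\cosh s\,\psi')'+\mu\cosh s\,\psi=0$. I would also exploit that the denominator loss is only $h\int(1-\psi^2)\cosh s$, where $h<h_0$, and, if necessary, use the exterior side of $\partial A$ as well, with a cutoff decaying outside $A$; this lets the transition layer be split across $\partial A$.

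The hardest step is extracting the explicit value $0.682$ from this optimization. It requires solving, or bounding numerically, the resulting ODE eigenvalue problem and checking that the worst case over $h\in(0,h_0)$ stays below $0.682$. The geometric comparison for $L(s)$ is standard. The delicate point is the interplay between the optimal profile and the constraint $h<h_0$, which I would verify by an explicit numerical evaluation at the threshold $h=h_0$, where the bound is worst.

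Finally, letting $\varepsilon\to0$ gives $\lambda_0(\Sigma)\le 0.682\,h(\Sigma)$.
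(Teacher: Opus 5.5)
Your reduction to $h<h_0=1/(4\cdot 0.682)$ via $\lambda_0\le\frac14$ is fine. The rest of the plan has two genuine gaps.

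\textbf{The parallel-curve comparison fails.} The bound $L(s)\le|\partial A|\cosh s$ is false for an arbitrary near-optimal $A$. At a point of $\partial A$, the length density of the inner parallel curve is $\cosh s-\kappa\sinh s$, where $\kappa$ is the geodesic curvature with respect to the inner normal. This exceeds $\cosh s$ wherever $\partial A$ is concave. Gauss--Bonnet only gives $\int_{\partial A}\kappa=2\pi\chi(A)+|A|$, which says nothing pointwise in $s$ and can be offset by $\chi(A)\ll 0$. For instance, punching $k$ tiny discs of radius $\varepsilon$ out of $A$ changes $|\partial A|/|A|$ by $O(k\varepsilon)$, yet adds about $2\pi k\sinh s$ to $L(s)$.

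Repairing this requires choosing $A$ with controlled boundary curvature, ideally a Cheeger set with $\kappa\equiv h$. On an infinite-area surface such sets need not exist: already in $\mathbf H^2$ the value $h=1$ is not attained. An additive error $C(s)$, as you suggest, would destroy any explicit constant.

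\textbf{The one-sided scheme cannot reach $0.682$.} This holds even granting $L(s)\le|\partial A|\cosh s$. At $h=h_0$, the required inequality $I_1/(1-h_0I_2)\le 0.682$ is equivalent to $I_1+\frac14 I_2\le 0.682$. But for every admissible profile,
\[
I_1+\tfrac14 I_2=\int_0^t\Bigl(\psi'^2+\tfrac14(1-\psi^2)\Bigr)\cosh s\,ds\ \ge\ \int_0^t|\psi'|\sqrt{1-\psi^2}\,ds\ \ge\ \int_0^1\sqrt{1-u^2}\,du=\frac{\pi}{4}\approx 0.785 .
\]
Running the same estimate with coefficient $0.682h$ in place of $\frac14$ shows the scheme fails for all $h$ in roughly $(0.28,h_0)$. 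On that interval the trivial bound $1/(4h)$ also exceeds $0.682$, so no choice of ODE optimizer helps.

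Only a two-sided variant has a chance. It needs outward parallel-curve bounds, which are worse for general $A$: the density is $\cosh s+\kappa\sinh s$, and $\int_{\partial A}\kappa$ can be as large as $|A|\approx|\partial A|/h$. Even then, the decisive numerical step is still missing.

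\textbf{What the paper does instead.} It avoids test sets and parallel curves entirely. It quotes the heat-semigroup form of Buser's inequality under $\operatorname{Ric}\ge-1$ \cite[Theorem 1.1]{MR4225812}:
\[
h\ \ge\ \sup_{t>0}\sqrt{2\pi}\,\frac{1-e^{-\lambda_0t}}{\cosh^{-1}(e^t)}.
\]
It then uses that $\lambda\mapsto\lambda/(1-e^{-\lambda t})$ is increasing, together with $\lambda_0\le\frac14$, and sets $t=2$ to get $\lambda_0/h\le 0.6815$.
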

\begin{proof}
    Let $S$ be an infinite-volume complete hyperbolic surface and let $\lambda_0=\lambda_0(S)$, $h=h(S)$. Since $\operatorname{Ric}(S)=-1$, \cite[Theorem 1.1]{MR4225812} implies that 
    \[
    h\geq \sup_{t>0}\sqrt{2\pi}\frac{1-e^{-\lambda_0t}}{\tanh^{-1}(\sqrt{1-e^{-2t}})}=\sup_{t>0}\sqrt{2\pi}\frac{1-e^{-\lambda_0t}}{\cosh^{-1}(e^{t})}.
    \]
    It follows that 
    \[
    \frac{\lambda_0}{h}\leq \frac{\lambda_0\cosh^{-1}(e^t)}{\sqrt{2\pi}(1-e^{-\lambda_0t})}.
    \]
    Since $\frac{\lambda_0}{1-e^{-\lambda_0t}}$ is increasing and $\lambda_0\leq \frac{1}{4}$ by Theorem~\ref{cheeger}, we have
    \[
    \frac{\lambda_0}{h}\leq \frac{\cosh^{-1}(e^t)}{4\sqrt{2\pi}(1-e^{-\frac{1}{4}t})}.
    \]
    Setting $t=2$ gives
    \[
    \frac{\lambda_0}{h}\leq 0.681483\ldots\leq 0.682.
    \]
\end{proof}

\subsection{Fricke coordinates} Let $\Sigma_g$ be a closed surface of genus $g$. The fundamental group of $\Sigma_g$ has the presentation
\[
\Gamma_g=\langle A_1,B_1,A_2,B_2,\cdots, A_g,B_g: \prod_{i=1}^g[A_i,B_i]=e\rangle.
\]
The \textit{Fricke coordinates} for the Teichm\"uller space of $\Sigma_g$ are the collection of real numbers $\underline{X}:=(a_i, c_i, d_i)_{i=1}^{2g-2}$, where $c_i>0$ for all $i$. For each $\underline{X}$, there is a corresponding representation $\psi_{\underline{X}}:\Gamma_g\to \PSL(2,\mathbb{R})$  given by 
\[
\psi_{\underline{X}}(A_i)=\begin{pmatrix}
    a_{2i-1}&\frac{a_{2i-1}d_{2i-1}-1}{c_{2i-1}}\\
    c_{2i-1}& d_{2i-1}
\end{pmatrix}, \quad 1\leq i\leq g-1;
\]
\[
\psi_{\underline{X}}(B_i)=\begin{pmatrix}
    a_{2i}&\frac{a_{2i}d_{2i}-1}{c_{2i}}\\
    c_{2i}& d_{2i}
\end{pmatrix}, \quad 1\leq i\leq g-1;
\]
\[
\psi_{\underline{X}}(A_{g})=\begin{pmatrix}
    a_g&b_g\\
    c_g& d_g
\end{pmatrix}, \quad a_g+d_g=b_g+c_g>0;
\]
\[
\psi_{\underline{X}}(B_{g})=\begin{pmatrix}
    \zeta&0\\
    0& \frac{1}{\zeta}
\end{pmatrix}, \quad \zeta>1.
\]
The numbers $a_g$, $b_g$, $c_g$, $d_g$, and $\zeta$ are uniquely determined up to a sign by the Fricke coordinates and the fundamental relation 
\[
\prod_{i=1}^g[A_{i},B_{i}]=e.
\]

This representation admits a lift to $\SL(2,\mathbb{R})$. This is a classical result; see, for instance, \cite[Lemma 2.2]{MR4920633}. By abuse of notation, we regard  $\psi_{\underline{X}}$ as an $\SL(2,\mathbb{R})$-valued representation. For details, see \cite[p.~49]{MR1215481} or \cite[Section 8.1]{MR4874183}. 

For a point $d\in\mathcal{T}_g$, we also denote the corresponding representation associated with its Fricke coordinates by $\psi_d$. By \cite[Section 8.2]{MR4874183}, for every $\gamma\in \Gamma_g$, the function $\mathrm{tr}^2(\psi_{\underline{X}}(\gamma))$ is  rational in the coordinates $\underline{X}$.

\section{Short splitting multicurves on closed surfaces}\label{Sec: short multicurves}
In this section, we apply the construction in \cite{MR4905028} to form the splitting multicurve used in the next section.
\begin{defn}
Let $\Sigma_g$ be a surface of genus $g\geq 2$. A multicurve $\alpha$ is called a \emph{nice splitting multicurve} if $\Sigma_g\setminus\alpha$ is the disjoint union of two connected surfaces $\Sigma$ and $\Sigma'$, where $\Sigma$ has type $(0,3)$ or $(1,1)$.
\end{defn}
We now construct a short nice splitting multicurve, following the method of \cite{MR4905028}. Because our required estimate differs from the one used there, we include the details.

Let $(\Sigma_g,d)$ be a closed surface of genus $g\geq 2$. Note that a homologically nontrivial simple closed curve is nonseparating. By \cite{MR2893492}, there is always a nonseparating curve $\gamma$ on $\Sigma_g$ with length less than $\operatorname{sys}(g)$. Then $\Sigma_\gamma:=\Sigma_g\setminus\gamma$ is a surface of type $(g-1,2)$. 

We first construct a splitting curve on surfaces of type $(g-1,2)$ with geodesic boundary curves of length $\operatorname{sys}(g)$.

The following construction is used in \cite[p.~409]{MR4905028}. We use a different estimate.

For $i=1,2$, let $w_i$ be the maximal number such that the half-collar 
\[
N(\gamma_i,w_i):=\setdef{x\in \Sigma_\gamma}{d(x,\gamma_i)<w_i}
\]
is isometric to $[0,w_i)\times S^1$ with the metric of Lemma~\ref{lemma: collar}. 

\textbf{Case 1: The two collar neighborhoods do not meet.} In this case, \[\partial N(\gamma_1,w_1)\cap \partial N(\gamma_2,w_2)=\emptyset.\]
Now there are geodesic arcs $C_i$ on $\Sigma_\gamma$ connecting two different points on $\gamma_i$ of length $2w_i$ for $i=1,2$. Let $\alpha_1=\{\gamma_1, \mu_1, \mu_2\}$ be the multicurve shown in Figure~\ref{figure1}. Define $\alpha_2$ similarly. We call the replacement of $\gamma_i$ by $\alpha_i$ a \textit{type I move}. The multicurve $\alpha_i$  cuts off a pair of pants $Y_i$ from $\Sigma_\gamma$.  
\begin{figure}[ht]

\begin{tikzpicture}[
    line cap=round,
    line join=round,
    thick,
    scale=1.05
]
\begin{scope}[xscale=0.5]
    
% ------------------------------------------------
% Left: pair of pants
% ------------------------------------------------

% left boundary circle
\draw (-0.2,0) ellipse [x radius=0.38, y radius=0.88];

% top boundary of the pair of pants / attached surface
\draw
  (-0.2,0.88)
  .. controls (1.1,0.80) and (2.8,1.02) .. (4.2,1.02)
  .. controls (4.9,1.02) and (5.35,1.55) .. (5.95,1.62)
  -- (9.25,1.92);

\draw
  (-0.2,-0.88)
  .. controls (1.1,-0.80) and (2.8,-1.02) .. (4.2,-1.02)
  .. controls (4.9,-1.02) and (5.35,-1.55) .. (5.95,-1.62)
  -- (9.25,-1.92);

% bottom boundary of the pair of pants / attached surface
%\draw (-0.2,-0.88).. controls (1.5,-0.88) and (3.3,-0.90) .. (4.55,-1.06).. controls (5.20,-1.15) and (5.75,-1.40) .. (6.05,-1.72).. controls (6.60,-1.60) and (7.35,-1.58) .. (8.65,-1.35)-- (9.25,-1.35);

% ------------------------------------------------
% Middle: central ellipse and seams
% ------------------------------------------------

% central ellipse (full ellipse, not half-circle)
\draw (7.10,0.18) ellipse [x radius=1.05, y radius=0.34];

% short neck joining left part to the ellipse
%\draw (4.95,0.18).. controls (5.45,0.18) and (5.75,0.18) .. (6.05,0.18);

% dashed gluing seams to the right-hand surface
%\draw[dashed] (6.25,1.92) -- (6.25,0.52);
%\draw[dashed] (6.25,-0.06) -- (6.25,-1.72);

% solid half
    \draw[dashed] (6.25,-0.06) arc[start angle=90,end angle=270,x radius=0.2,y radius=0.8];

    % dashed half
    \draw (6.25,-1.66) arc[start angle=-90,end angle=90,x radius=0.2,y radius=0.8];

    \draw[dashed] (6.25,1.62) arc[start angle=90,end angle=270,x radius=0.2,y radius=0.6];

    % dashed half
    \draw (6.25,0.42) arc[start angle=-90,end angle=90,x radius=0.2,y radius=0.6];

% small connecting curves near the ellipse
%\draw (5.75,0.52).. controls (5.92,0.42) and (6.00,0.30) .. (6.05,0.18);

%\draw (5.75,-0.16).. controls (5.92,-0.06) and (6.00,0.06) .. (6.05,0.18);

% ------------------------------------------------
% Geodesic
% ------------------------------------------------

\draw[dashed] (-0.55,0.2) arc[start angle=180,end angle=0,x radius=3.3,y radius=0.1];

\draw (0.18,-0.05) arc[start angle=200,end angle=360,x radius=3.05,y radius=0.22];
% ------------------------------------------------
% Optional labels
% ------------------------------------------------
\node at (8.0,2.18) {$\Sigma_2$};
\node[below] at (3.3,0.18) {$C_1$};
\node at (7,1.22) {$\mu_1$};
\node at (7,-0.84) {$\mu_2$};
\node at (-1,0) {$\gamma_1$};
\end{scope}
\end{tikzpicture}
\caption{Type I move for $\gamma_1$}\label{figure1}
\end{figure}

The estimate of the length of $\alpha_i$ in \cite{MR4905028} does not suffice for our application. Hence we will use a more robust estimate. 

Since the half collar $N(\gamma_i,w_i)$ is inside $\Sigma_\gamma$, the area computation leads to \[2\min_i\{\operatorname{sys}(g)\sinh(w_i)\}\leq (4g-4)\pi.\] 
Hence,
\begin{equation}\label{eq: w}
\min_i\{w_i\}\leq \ln\left(1+\frac{4(g-1)\pi}{\operatorname{sys}(g)}\right).
\end{equation}

The three geodesics that minimize the distance between the boundaries of the pair of pants and $C_i$ cut it into four right-angled pentagons as in Figure~\ref{fig: alpha}. 

\begin{figure}[ht]
    \centering
\begin{tikzpicture}[scale=4]

% Parameters for a regular right-angled hyperbolic hexagon
\pgfmathsetmacro{\rho}{sqrt(2-sqrt(3))} % vertex radius
\pgfmathsetmacro{\C}{sqrt(2)}            % geodesic circle center radius

% Six hyperbolic sides.
% Each side is an arc of a Euclidean circle orthogonal to the unit circle.
\foreach \a in {0,60,120,180,240,300} {
  \draw[very thick]
    ({\C*cos(\a) + cos(\a+195)},
     {\C*sin(\a) + sin(\a+195)})
    arc[start angle={\a+195}, end angle={\a+165}, radius=1];
}

% Vertices
\foreach \a in {30,90,150,210,270,330} {
  \fill ({\rho*cos(\a)}, {\rho*sin(\a)}) circle (0.012);
}
\draw (-0.42, 0)--(0.42, 0);
\node at (0,-0.05) {$C_1'$};
\node at (-0.5,0.1) {$u$};
\node at (-0.5,-0.1) {$v$};
\node at (-0.35,0) {$\gamma'_1$};
\node at (0.2,0.3) {$\mu'_1$};
\node at (0.2,-0.3) {$\mu'_2$};

\end{tikzpicture}
\caption{Computation of Length of $\alpha$}
    \label{fig: alpha}
\end{figure}
In the figure, $C_1'$ cuts the curve $\gamma_1'$ into two pieces with lengths $u$ and $v$ respectively. We also have
\[
\ell_d(C_1')=w_1, \quad \ell_d(\mu_1')=\frac{\ell_d(\mu_1)}{2},\quad \ell_d(\mu_2')=\frac{\ell_d(\mu_2)}{2}, \quad u+v=\frac{\operatorname{sys}(g)}{2}.
\]
By the right-angled pentagon formula \cite[Equation (2.6.17)]{MR1435975},
\[
\cosh(\ell_d(\mu'_1))=\sinh(u)\sinh(w_1),
\]
\[
\cosh(\ell_d(\mu'_2))=\sinh(v)\sinh(w_1).
\]
Hence, \[2(\ell_d(\mu'_1)+\ell_d(\mu'_2))=2
\left(\cosh^{-1}(\sinh u\sinh{w_1})+\cosh^{-1}(\sinh v\sinh{w_1})\right).\] 
Since $\cosh^{-1}(\sinh t \sinh{w_1})$ is a concave function for $t>0$, we have
\begin{equation}\label{eq: type I move}
\ell_d(\mu_1)+\ell_d(\mu_2)\leq 4\cosh^{-1}\left(\sinh\frac{\operatorname{sys}(g)}{4}\sinh{w_1}\right)\leq \operatorname{sys}(g)+4w_1-4\ln 2.\end{equation}
The latter inequality follows from the facts
\[
\sinh\frac{\operatorname{sys}(g)}{4}\sinh{w_1}\leq \frac{1}{4}\exp\left(\frac{\operatorname{sys}(g)}{4}+w_1\right),\quad 
\cosh^{-1}(z)\leq \ln(2z).
\]

The same argument applies to $\gamma_2$. It leads to 
\begin{equation}\label{eq: length of alpha}
\ell_d(\alpha_i)\leq 2\left(\operatorname{sys}(g)+2w_i-2\ln2\right).
\end{equation}

Without loss of generality, we may assume $w_1\leq w_2$. Together with equation~\eqref{eq: w}, equation~\eqref{eq: length of alpha} implies that 
\[
\ell_d(\alpha_1)\leq 2\operatorname{sys}(g)-4\ln 2+4\ln \left(1+\frac{4(g-1)\pi}{\operatorname{sys}(g)}\right).
\]
Define 
\[
h_{\mathrm{I}}(g):=2\operatorname{sys}(g)-4\ln 2+4\ln \left(1+\frac{4(g-1)\pi}{\operatorname{sys}(g)}\right).
\]
Then $\alpha_1$ is a nice splitting multicurve with total length at most $h_{\mathrm{I}}(g)$.

\textbf{Case 2: The two collar neighborhoods meet.} In this case, \[\partial N(\gamma_1,w_1)\cap \partial N(\gamma_2,w_2)\neq\emptyset.\] 
We are in the situation of Figure~\ref{Figure 2}.
Set $\alpha=\eta$. The curve $\alpha$ is a nice splitting curve.
We call this replacement a type II move.
\begin{figure}[h]
\begin{tikzpicture}[
    line cap=round,
    line join=round,
    thick,
    scale=1.00
]

% ------------------------------------------------
% Left: ambient surface Sigma_2
% ------------------------------------------------
\begin{scope}[xscale=0.5]
% top boundary of the surface
\draw
  (-3.8,0.95)
  .. controls (-2.8,0.95) and (-1.3,0.88) .. (0,0.85);

% bottom boundary of the surface
\draw
  (-3.8,-0.95)
  .. controls (-2.8,-0.95) and (-1.3,-0.88) .. (0,-0.85);

\node at (-2.3,1.35) {$\Sigma'$};

% ------------------------------------------------
% Glued boundary (half dashed, half solid ellipse)
% ------------------------------------------------

% left half dashed
\draw[dashed] (0,0.85)
  arc[start angle=90,end angle=270,x radius=0.25,y radius=0.85];

% right half solid
\draw (0,-0.85)
  arc[start angle=-90,end angle=90,x radius=0.25,y radius=0.85];

\node[left] at (-0.15,0) {$\eta$};

% ------------------------------------------------
% Pair of pants to the right
% ------------------------------------------------
%\begin{scope}[xscale=0.5]
    
% outer top boundary
\draw
  (0,0.85)
  .. controls (1.2,0.80) and (3.0,0.86) .. (4.65,1.00)
  .. controls (5.65,1.10) and (6.75,1.55) .. (8,1.75);

% outer bottom boundary
\draw
  (0,-0.85)
  .. controls (1.8,-0.85) and (3.4,-0.90) .. (4.80,-1.05)
  .. controls (5.80,-1.18) and (6.80,-1.45) .. (8.1,-1.55);

\draw (8.1,0.55)
  arc[start angle=90,end angle=270,x radius=1.2,y radius=0.45];  

\node at (6.5,0.1) {$C$};

% ------------------------------------------------
% Right: two free boundary components
% ------------------------------------------------

% top free boundary gamma_0
\draw (7.85,1.15)
  arc[start angle=180,end angle=0,x radius=0.25,y radius=0.60];
\draw (8.35,1.15)
  arc[start angle=0,end angle=-180,x radius=0.25,y radius=0.60];

% bottom free boundary gamma_1
\draw (7.85,-0.95)
  arc[start angle=180,end angle=0,x radius=0.25,y radius=0.60];
\draw (8.35,-0.95)
  arc[start angle=0,end angle=-180,x radius=0.25,y radius=0.60];

\node[right] at (8.55,1.15) {$\gamma_1$};
\node[right] at (8.55,-0.95) {$\gamma_2$};

% ------------------------------------------------
% Curve c inside the pants
% ------------------------------------------------
\end{scope}
\end{tikzpicture}
\caption{Type II move}\label{Figure 2}
\end{figure}

By the right-angled hexagon formula \cite[Equation~(2.6.10)]{MR1435975},
\[
\ell_d(\eta)=2\cosh^{-1}\left(\cosh(\ell_d(C))\sinh^2\left(\frac{\ell_d(\gamma_1)}{2}\right)-\cosh^2\left(\frac{\ell_d(\gamma_1)}{2}\right)\right)
\]
The same estimate gives 
\[
\ell_d(\eta)\leq 2\ell_d(\gamma_1)+2\ell_d(C)-4\ln2.
\]
Since $\ell_d(C)=w_1+w_2$,  
\[
\ell_d(\eta)\leq 2\left(\operatorname{sys}(g)-2\ln2+w_1+w_2\right).
\]

Since the half collar $N(\gamma_i,w_i)$ is inside $\Sigma_\gamma$, the area computation leads to \[\ell_d(\gamma_1)\sinh(w_1)+\ell_d(\gamma_2)\sinh(w_2)\leq (4g-4)\pi.\] 
The function $\sinh^{-1}(t)$ is concave for $t>0$. Then by Jensen's inequality,
\[
w_1+w_2\leq 2\sinh^{-1}\left(\frac{2(g-1)\pi}{\operatorname{sys}(g)}\right).
\]
Hence,
\begin{equation}\label{eq: eta}
    \ell_d(\eta)\leq 2\left(\operatorname{sys}(g)-2\ln2+2\sinh^{-1}\left(\frac{2(g-1)\pi}{\operatorname{sys}(g)}\right)\right)
\end{equation}

Define 
\[
h_{\mathrm{II}}(g)=2\left(\operatorname{sys}(g)-2\ln2+2\sinh^{-1}\left(\frac{2(g-1)\pi}{\operatorname{sys}(g)}\right)\right),
\]
and set $H_{\mathrm{split}}(g)=\max\{h_{\mathrm{I}}(g),h_{\mathrm{II}}(g)\}.$ In either of the two cases above, we obtain a nice splitting multicurve of total length at most $H_{\mathrm{split}}(g)$.

Since for all $t>0$,
    \[
    \sinh^{-1}(t)\leq \ln(1+2t),
    \]
    $h_{\mathrm{I}}(g)\geq h_{\mathrm{II}}(g)$. Therefore $H_{\mathrm{split}}(g)=h_{\mathrm{I}}(g)$.

    The function $f(t)=2t+4\ln(1+\frac{4(g-1)\pi}{t})$ is increasing on $(2,\infty)$ because
    \[
    f'(t)=2+\frac{4}{t+4(g-1)\pi}-\frac{4}{t}>2-\frac{4}{2}=0.\] 
    Since $\operatorname{sys}(g)\geq 2$, by equation~\eqref{eq: Bavard bounds},
    \[
    H_{\mathrm{split}}(g)=h_{\mathrm{I}}(g)\leq 4\ln g+5.362-4\ln2+4\ln\left(1+\frac{4(g-1)\pi}{2\ln g+2.681}\right).
    \]

    Define
    \begin{equation}\label{eq: define h}
    H(g)=4\ln g+5.362-4\ln2+4\ln\left(1+\frac{4(g-1)\pi}{2\ln g+2.681}\right).
    \end{equation}

Now we are back to the general case. Let $(\Sigma_g,d)$ be a closed hyperbolic surface of genus $g$. Recall that by \cite{MR2893492}, there is always a nonseparating curve $\gamma$ on $\Sigma_g$ with 
\[\ell_d(\gamma)\leq\operatorname{sys}(g).\] 
If
\[
\ell_d(\gamma)=\operatorname{sys}(g),
\]
then the preceding construction applies directly and yields the required nice splitting multicurve. We may therefore assume that
\[
\ell_d(\gamma)<\operatorname{sys}(g).
\] 
Let $d_0$ denote the metric induced on the surface obtained by cutting $(\Sigma_g,d)$ along $\gamma$. Its two boundary components both have length $\ell_d(\gamma)<\operatorname{sys}(g)$.

Apply Theorem~\ref{thm: change of boundary} to an enumeration of the isotopy classes of all essential simple closed curves in the interior of the cut surface. We obtain a metric $d_+$ with the
prescribed boundary lengths such that
\[
\ell_{d_0}(\beta)\leq\ell_{d_+}(\beta)
\]
for every essential simple closed curve $\beta$ in the interior.

We perform the collar construction with respect to $d_+$. Let $\alpha$ be the resulting nice splitting multicurve. Every component of $\alpha$ is either one of the distinguished boundary curves or an essential simple closed curve in the interior. Consequently,
\[
\ell_{d_0}(\alpha)\leq\ell_{d_+}(\alpha).
\]

Regard the components of $\alpha$ as topological curves in the original cut surface, and reglue the two copies of $\gamma$. The resulting curves form a nice splitting multicurve $\alpha$ on $(\Sigma_g,d)$.

 The preceding discussion proves the following result.

\begin{proposition}\label{Propo: nice splitting multicure}
    Let $d\in \mathcal{T}_g$. There exists a nice splitting multicurve on $(\Sigma_g,d)$ of total length at most $H(g)$.
\end{proposition}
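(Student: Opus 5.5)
The plan is to assemble the construction from the preceding discussion into a proof, in three stages. First, reduce to a single nonseparating curve. By \cite{MR2893492} there is a nonseparating simple closed geodesic $\gamma$ on $(\Sigma_g,d)$ with $\ell_d(\gamma)\leq\operatorname{sys}(g)$. Cut along $\gamma$ to obtain a surface $\Sigma_\gamma$ of type $(g-1,2)$ with two geodesic boundary components. If $\ell_d(\gamma)<\operatorname{sys}(g)$, apply Theorem~\ref{thm: change of boundary} to an enumeration of all essential simple closed curves in the interior of $\Sigma_\gamma$, with $\epsilon_1=\epsilon_2=\operatorname{sys}(g)-\ell_d(\gamma)$. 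This gives a metric $d_+$ whose boundary lengths both equal $\operatorname{sys}(g)$ and which does not shorten any interior simple closed curve.

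Second, for a metric on $\Sigma_\gamma$ whose boundary lengths equal $\operatorname{sys}(g)$, produce a nice splitting multicurve of length at most $h_{\mathrm{I}}(g)$, using the dichotomy on maximal half-collars.
\begin{itemize}
\item \emph{Case 1 (the collars do not meet).} Perform a type I move at the boundary with the smaller width $w_1$. The area bound \eqref{eq: w} together with the pentagon estimate \eqref{eq: type I move} gives $\ell(\alpha_1)\leq h_{\mathrm{I}}(g)$.
\item \emph{Case 2 (the collars meet).} Perform a type II move. The hexagon formula and Jensen's inequality give \eqref{eq: eta}, that is, $\ell(\eta)\leq h_{\mathrm{II}}(g)\leq h_{\mathrm{I}}(g)$.
\end{itemize}
In each case, check that the complement in $\Sigma_\gamma$ of the new multicurve is a pair of pants together with a connected surface. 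After regluing along $\gamma$, this yields $\Sigma_g\setminus\alpha=\Sigma\sqcup\Sigma'$ with $\Sigma$ a pair of pants or a one-holed torus. In Case 2 the pants bounded by $\gamma_1,\gamma_2,\eta$ becomes a one-holed torus after gluing. In Case 1 the pants $Y_1$ has boundary $\gamma_1,\mu_1,\mu_2$; after regluing, $\gamma_1$ is interior to the other side, so the splitting multicurve is $\{\mu_1,\mu_2\}$ together with possibly $\gamma$. This combinatorial bookkeeping should be verified, though lengths only decrease if components are dropped.

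Third, transfer the bound back to $d$ and replace $h_{\mathrm{I}}(g)$ by $H(g)$. Every component of $\alpha$ is either a boundary copy of $\gamma$ or an interior essential simple closed curve. Hence $\ell_{d}(\alpha)\leq\ell_{d_+}(\alpha)\leq h_{\mathrm{I}}(g)$, using $\ell_d(\gamma)<\operatorname{sys}(g)$ for the boundary components and the monotonicity from Theorem~\ref{thm: change of boundary} for the interior ones. Finally, $f(t)=2t+4\ln(1+4(g-1)\pi/t)$ is increasing on $(2,\infty)$, and $2\leq\operatorname{sys}(g)\leq 2\ln g+2.681$ by \eqref{eq: Bavard bounds}. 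Therefore $h_{\mathrm{I}}(g)\leq H(g)$ as defined in \eqref{eq: define h}.

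The main obstacle is the transfer step. Theorem~\ref{thm: change of boundary} only gives strict increase curve by curve for a countable family. The multicurve $\alpha$ is constructed for $d_+$, so it depends on $d_+$, but it is still among the enumerated isotopy classes, so the comparison is legitimate. I would also double-check that Case 1 really uses the correct width: the collar-area estimate needs the half-collars to be embedded and disjoint, which holds in that case by maximality of the $w_i$.
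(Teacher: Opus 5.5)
Your proposal is the paper's proof essentially step for step: the curve $\gamma$ from \cite{MR2893492}, lengthening both boundary components to $\operatorname{sys}(g)$ via Theorem~\ref{thm: change of boundary} applied to all interior isotopy classes, the type~I/type~II dichotomy with \eqref{eq: w}, \eqref{eq: type I move}, \eqref{eq: eta}, the comparison $\ell_d(\alpha)\le\ell_{d_+}(\alpha)$, and monotonicity of $f$ to get $h_{\mathrm I}(g)\le H(g)$. One correction: in Case~1, $\gamma$ is not interior to either side after regluing but adjacent to both $Y_1$ and $\Sigma_\gamma\setminus Y_1$, so it must be kept, which is why the paper takes $\alpha_1=\{\gamma_1,\mu_1,\mu_2\}$ and why $h_{\mathrm I}(g)$ already includes $\ell_{d_+}(\gamma_1)$; the connectivity of $\Sigma_\gamma\setminus Y_1$ that you defer is likewise only asserted in the paper (via Figure~\ref{figure1}) and needs an argument, since a priori one of $\mu_1,\mu_2$ could cut off a positive-genus subsurface not containing $\gamma_2$.
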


\section{Trace-equivalence classes and bounded trace multiplicity}\label{sec: different traces}
In this section, we construct families of elements whose squared traces have uniformly bounded multiplicity for generic hyperbolic structures.

Let $(\Sigma_g,d)$ be a closed surface of genus $g$. By Proposition~\ref{Propo: nice splitting multicure}, there is a nice splitting multicurve $\alpha$ of total length at most $H(g)$. $\Sigma_g\setminus \alpha$ is a disjoint union of two surfaces $\Sigma$ and $\Sigma'$. By the definition of nice splitting multicurves, $\Sigma$ is of type $(1,1)$ or $(0,3)$. We deal with the two different cases in the next two subsections.
\subsection{One-holed torus} In this subsection, we assume that $\Sigma$ is a surface of type $(1,1)$. Then $\Sigma'$ is a surface of type $(g-1,1)$. Let $\pi_1(\Sigma)=F(a_1,b_1)$ and $\pi_1(\Sigma')=F(a_2,b_2,\cdots,a_g,b_g)$. Then
\[
\pi_1(\Sigma_g)=\langle a_1,b_1,\cdots,a_g,b_g:\prod_{i=1}^g[a_i,b_i]=e \rangle.
\]

Any $d\in \mathcal{T}_g$ induces a faithful representation 
\[
\phi_d:\pi_1(\Sigma_g)\to \SL(2,\mathbb{R}).
\]

For each $A\in\pi_1(\Sigma_g)$, define the squared trace function
\[
T_A:\mathcal T_g\longrightarrow\mathbb R,
\qquad
T_{A}(d)=\tr^2(\phi_d(A)).
\]
The use of the squared trace makes this function independent of the choice of lift of $\phi_d$ to $\SL(2,\mathbb R)$. 

\begin{defn}
Two elements $A,B\in\pi_1(\Sigma_g)$ are called \emph{trace equivalent}, denoted by
\[
A\sim_{\tr}B,
\]
if
\[
T_A(d)=T_B(d)
\]
for every $d\in\mathcal T_g$. Furthermore, for $A,X,Y\in\pi_1(\Sigma')$, $X$ and $Y$ are \emph{$A$-trace equivalent} if
\[
AX\sim_{\tr} AY.
\]
\end{defn}

Thus, trace equivalence is an identity between trace functions on the whole Teichm\"uller space, rather than an equality occurring at one particular hyperbolic structure.

We analyze the $a_1$-trace-equivalent classes in $\pi_1(\Sigma')$. Notice that, unlike in \cite{hao2025trace}, there are distinct elements $X, Y\in \pi_1(\Sigma')$ that are $a_1$-trace equivalent. Indeed, let \[C=\prod_{i=2}^g[a_i,b_i]=[b_1,a_1].\] Then \[a_1C=a_1b_1a_1b_1^{-1}a_1^{-1}\] is conjugate to $a_1$. Thus, $C$ and $e$ form such a pair. We will show later that this is essentially the only obstruction.

Let $X, Y\in\pi_1(\Sigma')$ such that $a_1X$ and $a_1Y$ are trace equivalent.

We fix one $d\in\mathcal{T}_g$ and a representation $\phi_d$ such that \[
\phi_d(C)=\begin{pmatrix}
    -\kappa&0\\
    0&-\tfrac{1}{\kappa}
\end{pmatrix}\]
for some $\kappa>1$. Note that $\operatorname{tr}(\phi_d(C))<-2$; see \cite[Section 3A]{MR454075}.
Define \[C(s)=\begin{pmatrix}
    \kappa^s&0\\
    0&\kappa^{-s}
\end{pmatrix}.\] 

Let $\phi_d(a_1)=\begin{pmatrix}
    a_{11}&a_{12}\\
    a_{21}&a_{22}
\end{pmatrix}.$ Since $\operatorname{tr}(\phi_d(a_1))=\operatorname{tr}(\phi_d(a_1C))$, we have 
\[
-\kappa a_{11}-\frac{1}{\kappa} a_{22}=a_{11}+a_{22}.
\]
It follows that 
\begin{equation}\label{eq: diagonal of a1}
    a_{22}=-\kappa a_{11}.
\end{equation}
If $a_{11}=a_{22}=0$, then $\phi_d(a_1^2)$ will commute with $\phi_d(C)$, which is not the case since $\phi_d$ is faithful and $a_1^2$ does not commute with $C$. We conclude that $a_{11}\neq0$. The condition $\det \phi_d(a_1)=1$ implies that \begin{equation}\label{eq: skew diagonal of a1}
a_{12}a_{21}=a_{11}a_{22}-1<-1.\end{equation}
%Similarly, there exist $P\in \SL(2,\mathbb{R})$ and $\tau>1$ such that \[\phi_d(b_1)=P\begin{pmatrix}\tau&0\\0&\tfrac{1}{\tau}\end{pmatrix}P^{-1}.\]Define $B(s)=P\begin{pmatrix}\tau^s&0\\0&\tau^{-s}\end{pmatrix}P^{-1}.$

Consider the following family of representations: 
\[
\phi_d^{s}:\pi_1(\Sigma_g)\to \SL(2,\mathbb{R})
\]
given by 
\[
\phi_d^{s}(a_1)=C(s)\phi_d(a_1)C(-s); \quad \phi_d^{s}(b_1)=C(s)\phi_d(b_1)C(-s
);\]
and for $2\leq i\leq g$,
\[
\phi^s_d(a_i)=\phi_d(a_i); \quad \phi^s_d(b_i)=\phi_d(b_i).
\]
Geometrically, $\phi_d^s$ corresponds to the Fenchel–Nielsen twist along the curve $C$.

Since $\mathcal{T}_g$ is an open set in the character variety, for any $s$ in a neighborhood $U$ of $0$, $\phi_d^{s}$ is a discrete, faithful representation, and provides a point in $\mathcal{T}_g$. Since the relevant trace functions are continuous and nonzero on $U$, the sign relating the two traces is constant on $U$ It follows that 
\[
\phi_d^{s}(a_1X)+\phi_d^{s}(a_1Y)\quad \text{or}\quad \phi_d^{s}(a_1X)-\phi_d^{s}(a_1Y)
\]
has zero trace for all $s$ in this neighborhood. Choose the sign so that
\[M^\pm=\phi_d(X)\pm \phi_d(Y)=\phi_d^{s}(X)\pm \phi^s_d(Y)\] satisfies the trace identity. Then we have for all $s\in U$,
\[
C(s)\phi_d(a_1)C(-s)M^\pm
\]
has trace zero. This condition imposes a strong restriction on the matrix $M^\pm$.

Let $M^\pm=\begin{pmatrix}
    m_{11}&m_{12}\\
    m_{21}&m_{22}
\end{pmatrix}$. Then
\begin{equation}
    C(s)\phi_d(a_1)C(-s)M^\pm=\begin{pmatrix}
       a_{11}m_{11}+\kappa^{2s}a_{12}m_{21}&*\\
       *&a_{22}m_{22}+\kappa^{-2s}a_{21}m_{12}
    \end{pmatrix}.
\end{equation}
The trace condition implies
\[
a_{11}m_{11}+a_{22}m_{22}=0; \quad a_{12}m_{21}=0; \quad a_{21}m_{12}=0.
\]
Using equations~\eqref{eq: diagonal of a1}, \eqref{eq: skew diagonal of a1} and $a_{11}\neq 0$, we obtain
\[
m_{11}-\kappa m_{22}=0;\quad m_{12}=0; \quad m_{21}=0.
\]
Hence $M^\pm=t\begin{pmatrix}
    \kappa&0\\
    0&1
\end{pmatrix}$ for some $t\in \mathbb{R}$. 

Let $\phi_d(Y)=\begin{pmatrix}
    y_{11}&y_{12}\\
    y_{21}&y_{22}
\end{pmatrix}$.
It follows that for some $t\in \mathbb{R}$
\[
\phi_d(X)=t\begin{pmatrix}
    \kappa&0\\
    0&1
\end{pmatrix}\mp\phi_d(Y).
\]
Taking the determinant, we have 
\[
t(t\kappa\mp(y_{11}+\kappa y_{22}))=0.
\]
Therefore, for each choice of sign, there is at most one nonzero value of $t$ satisfying the equation. This proves the following lemma.
\begin{lemma}\label{lemma: trace of case 1}
    Suppose that $\Sigma$ has type $(1,1)$. Then every $a_1$-trace-equivalent class in $\pi_1(\Sigma')$ contains at most three elements.
\end{lemma}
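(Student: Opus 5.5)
The argument just before the statement already does most of the work; what is left is to turn it into a count. Fix an $a_1$-trace-equivalence class $\mathcal C\subset\pi_1(\Sigma')$ and a reference element $Y\in\mathcal C$. Work at the single point $d\in\mathcal T_g$ chosen above, with the normalization $\phi_d(C)=\operatorname{diag}(-\kappa,-\kappa^{-1})$. For any $X\in\mathcal C$, the twist-deformation computation applies to the pair $(X,Y)$ and gives
\[
\phi_d(X)=t\begin{pmatrix}\kappa&0\\0&1\end{pmatrix}\mp\phi_d(Y),
\]
for some real $t$ and one of the two signs. I will show that only three matrices can occur on the right-hand side. Since $\phi_d$ is faithful, this bounds the number of group elements $X$ by three.

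The count goes as follows. For the sign $-$, the determinant condition $t\bigl(t\kappa-(y_{11}+\kappa y_{22})\bigr)=0$ gives two options. Either $t=0$, so $\phi_d(X)=\phi_d(Y)$ and $X=Y$. Or $t=(y_{11}+\kappa y_{22})/\kappa$, which produces at most one further element. For the sign $+$, the option $t=0$ would give $\phi_d(X)=-\phi_d(Y)$. This cannot come from a group element: the sign data are fixed by the chosen lift, so $-\phi_d(Y)$ would be $\phi_d(X)$ only if $\phi_d(XY^{-1})=-I$, and in $\PSL$ that means $XY^{-1}$ is trivial. If one prefers to count in $\PSL(2,\mathbb R)$, the cases $t=0$ with either sign coincide and give $X=Y$. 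Hence the sign $+$ contributes at most one element, namely the one with the nonzero root $t=-(y_{11}+\kappa y_{22})/\kappa$. Altogether we get $Y$, at most one element from the $-$ sign and at most one from the $+$ sign, so $|\mathcal C|\le 3$.

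Two points need care. The first is uniformity: the neighborhood $U$ and the choice of sign in the deformation argument depend on the pair $(X,Y)$. This is harmless, because only the fixed point $d$ and the fixed reference $Y$ enter the final count, and the sign merely records which branch $X$ falls into. The second is the identity $\phi_d^s(X)=\phi_d(X)$ for $X\in\pi_1(\Sigma')$, which holds because the twist fixes the generators $a_i,b_i$ for $i\ge2$. We also need the conjugated family $\phi_d^s$ to be a genuine representation, which holds because $C(s)$ commutes with $\phi_d(C)$, so the defining relation is preserved.

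The step I expect to be the main obstacle is the sign and lift bookkeeping. One has to check that trace equivalence really gives $\tr\phi^s_d(a_1X)=\pm\tr\phi^s_d(a_1Y)$ with the sign constant on $U$. This uses that the traces do not vanish, since they are hyperbolic elements with $|\tr|>2$. One also has to confirm that $-\phi_d(Y)$ and $\phi_d(Y)$ give at most one group element together, so that the bound comes out as three rather than four.
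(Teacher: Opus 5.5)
Your proposal is correct and follows the paper's argument. The twist along $C$ forces $\phi_d(X)\pm\phi_d(Y)=t\operatorname{diag}(\kappa,1)$, and the determinant condition then leaves only $t=0$ (which yields nothing beyond $X=Y$) plus at most one nonzero $t$ for each sign, so the class has at most three elements. One harmless slip is that you attached the $t=0$ outcomes to the wrong branches: for $\phi_d(X)=t\operatorname{diag}(\kappa,1)-\phi_d(Y)$, whose determinant equation is the one you wrote, $t=0$ gives $-\phi_d(Y)$ rather than $\phi_d(Y)$, but since both $t=0$ cases contribute nothing beyond $Y$ the count is unaffected.
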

\subsection{Pair of pants.} In this subsection, we assume that $\Sigma$ is a surface of type $(0,3)$. Then $\Sigma'$ is a surface of type $(g-2,3)$. Let $\pi_1(\Sigma')=F(b_1,b_2,a_3,b_3,\cdots,a_g,b_g)$. Then there exist $a_1$ and $a_2$ such that
\[
\pi_1(\Sigma_g)=\langle a_1,b_1,\cdots,a_g,b_g: \prod_{i=1}^g[a_i,b_i]=e \rangle.
\]

We analyze $a_1^{-1}a_2$-trace-equivalent classes in $\pi_1(\Sigma')$. 

Let $X, Y\in\pi_1(\Sigma')$ be an $a_1^{-1}a_2$-trace-equivalent pair. 

Fix one $d\in\mathcal{T}_g$ and a representation $\phi_d$ such that \[\phi_d(b_1)=\begin{pmatrix}
    \lambda &0\\
    0&\frac{1}{\lambda}
\end{pmatrix}, \quad \lambda>1.\]  Define $b_1(s)=\begin{pmatrix}
    e^s &0\\
    0&e^{-s}
\end{pmatrix}$. There exist $P\in \SL(2,\mathbb{R})$ and $\eta>1$ such that
\[\phi_d(b_2)=P\begin{pmatrix}
    \eta &0\\
    0&\frac{1}{\eta}
\end{pmatrix}P^{-1}.\] Define $b_2(s)=P\begin{pmatrix}
    e^s &0\\
    0&e^{-s}
\end{pmatrix}P^{-1}$.

Consider the family of representations
\[
\phi_d^{s,t}:\pi_1(\Sigma_g)\to \SL(2,\mathbb{R})
\]
given by 
\[
\phi_d^{s,t}(a_1)=\phi_d(a_1)b_1(s); \quad \phi_d^{s,t}(a_2)=\phi_d(a_2)b_2(t);
\]
\[
\phi_d^{s,t}(Z)=\phi_d(Z) \text{ for all $Z\in\pi_1(\Sigma')$.}\]

Geometrically, as above, $\phi_d^{s,t}$ corresponds to the Fenchel–Nielsen twists along the curves $b_1$ and $b_2$.

Since $\mathcal{T}_g$ is an open set in the character variety, for any $(s,t)$ in a neighborhood $V$ of $(0,0)$, $\phi_d^{s,t}$ is a discrete, faithful representation, and provides a point in $\mathcal{T}_g$. As in the one-holed torus case, the relevant traces are
continuous and nonzero on $V$. Hence, the sign relating the two traces is constant on $V$. It follows that  
\[
\phi_d^{s,t}(a_1^{-1}a_2X)+\phi_d^{s,t}(a_1^{-1}a_2Y)\quad \text{or}\quad \phi_d^{s,t}(a_1^{-1}a_2X)-\phi_d^{s,t}(a_1^{-1}a_2Y)
\]
has zero trace for all $(s,t)$ in this neighborhood. Choose the sign so that
\[M^\pm=\phi_d(X)\pm \phi_d(Y)=\phi_d^{s,t}(X)\pm \phi_d^{s,t}(Y)\] satisfies the trace identity. Then we have for all $(s,t)\in V$,
\[
b_1(-s)\phi_d(a_1^{-1}a_2)Pb_1(t)P^{-1}M^\pm
\]
has trace zero. This condition imposes a strong restriction on the matrix $M^\pm$.

We first show that every entry of $N=\phi_d(a_1^{-1}a_2)P$ is nonzero. Let \[N=\begin{pmatrix}
   n_{11}&n_{12}\\
   n_{21}&n_{22}
\end{pmatrix}.\] 

Note that $Nb_1(\ln \eta)N^{-1}=\phi_d(a_1^{-1}a_2b_2a_2^{-1}a_1)$ is  conjugate to $\phi_d(b_2)$. 

If $n_{11}n_{12}=0$, then this element and $\phi_d(b_1)$ have a common fixed point $0$ in
$\partial\mathbf H^2$. Both elements are hyperbolic. Since they
belong to a discrete Fuchsian group,
\cite[Theorem~5.1.2]{MR1393195} implies that they have the same two fixed points. After conjugating these fixed points to
$0$ and $\infty$, the subgroup generated by the two elements is identified, through the logarithm of the multiplier, with a discrete subgroup of $(\mathbb R,+)$. It is therefore cyclic. Moreover, $b_1$ is primitive in $\pi_1(\Sigma_g)$, so $\phi_d(b_1)$ generates this cyclic stabilizer. Consequently,
\[
\phi_d(a_1^{-1}a_2b_2a_2^{-1}a_1)=\phi_d(b_1)^k
\]
for some $k\in\mathbb Z$. The faithfulness of the holonomy
representation then gives
\[
a_1^{-1}a_2b_2a_2^{-1}a_1=b_1^k.
\]
Passing to homology yields
\[
[b_2]=k[b_1],
\]
contradicting the linear independence of $[b_1]$ and $[b_2]$ in
$H_1(\Sigma_g;\mathbb Z)$. Thus $n_{11}n_{12}\neq0$. The same
argument applied to the other fixed point gives
$n_{21}n_{22}\neq0$. Hence every entry of $N$ is nonzero.

Let $Q=P^{-1}M^\pm=\begin{pmatrix}
   q_{11}&q_{12}\\
   q_{21}&q_{22}
\end{pmatrix}$. Then
$b_1(-s)\phi_d(a_1^{-1}a_2)Pb_1(t)P^{-1}M^\pm$ has the form\[\begin{pmatrix}
   e^{-s+t}n_{11}q_{11}+e^{-s-t}n_{12}q_{21}&*\\
   *& e^{s+t}n_{21}q_{12}+e^{s-t}n_{22}q_{22}
\end{pmatrix}.
\]
Since the trace vanishes for every $(s,t)\in V$ 
and the four functions
\[
e^{-s+t},\quad e^{-s-t},\quad e^{s+t},\quad e^{s-t}
\]
are linearly independent, while every $n_{ij}$ is nonzero, we obtain
\[
q_{11}=q_{12}=q_{21}=q_{22}=0.
\] Hence $Q=0$. Therefore, $M^\pm=PQ=0$. Thus, $\phi_d(X)=\pm \phi_d(Y)$. After projecting to $\PSL(2,\mathbb R)$, the two holonomy images agree. Faithfulness therefore gives $X=Y$.

\begin{lemma}\label{lemma: trace of case 2}
    Suppose that $\Sigma$ has type $(0,3)$. Then every $a_1^{-1}a_2$-trace-equivalent class in $\pi_1(\Sigma')$ contains exactly one element.
\end{lemma}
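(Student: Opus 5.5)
The argument just before the statement already contains the whole mechanism, so the plan is to organize it into a clean proof. Fix an $a_1^{-1}a_2$-trace-equivalent pair $X,Y\in\pi_1(\Sigma')$; the goal is to show $X=Y$. Existence of at least one element in each class is trivial, since every $X$ is equivalent to itself, so ``exactly one'' reduces to uniqueness.

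\textbf{Step 1: normalization and deformation.} Fix $d\in\mathcal T_g$ and conjugate so that $\phi_d(b_1)$ is diagonal. Write $\phi_d(b_2)=P\,\mathrm{diag}(\eta,\eta^{-1})P^{-1}$. Form the two-parameter family $\phi_d^{s,t}$, which twists $a_1$ by $b_1(s)$ and $a_2$ by $b_2(t)$ and fixes $\pi_1(\Sigma')$. I will check that $\phi_d^{s,t}$ respects the surface relation: $a_i$ enters the relation only through $[a_i,b_i]$, and $b_i(\cdot)$ commutes with $\phi_d(b_i)$, so the relation is preserved. Because $\mathcal T_g$ is open in the character variety, $\phi_d^{s,t}$ stays in $\mathcal T_g$ for $(s,t)$ in a small neighborhood $V$ of the origin.

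\textbf{Step 2: linear trace condition.} Trace equivalence gives $\tr\phi^{s,t}_d(a_1^{-1}a_2X)=\pm\tr\phi^{s,t}_d(a_1^{-1}a_2Y)$ on $V$. These traces have absolute value greater than $2$, hence never vanish, so the sign is constant by continuity. Set $M^\pm=\phi_d(X)\pm\phi_d(Y)$, which is independent of $(s,t)$. Since trace is linear, this yields
\[
\tr\bigl(b_1(-s)\,N\,b_1(t)\,P^{-1}M^\pm\bigr)=0,\qquad N=\phi_d(a_1^{-1}a_2)P,
\]
for all $(s,t)\in V$. Expanding with $Q=P^{-1}M^\pm$ produces a combination of $e^{\pm s\pm t}$ with coefficients $n_{11}q_{11}$, $n_{12}q_{21}$, $n_{21}q_{12}$, $n_{22}q_{22}$. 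Linear independence of these four exponentials on an open set then forces each product to vanish.

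\textbf{Step 3 (main obstacle): all entries of $N$ are nonzero.} Observe that $N\,\mathrm{diag}(\eta,\eta^{-1})N^{-1}=\phi_d(a_1^{-1}a_2b_2a_2^{-1}a_1)$. A zero entry in a row of $N$ means this hyperbolic element shares a fixed point ($0$ or $\infty$) with $\phi_d(b_1)$. Discreteness then forces both elements to share both fixed points, so they generate a cyclic group. Since $b_1$ is primitive, I will deduce $a_1^{-1}a_2b_2a_2^{-1}a_1=b_1^k$ using faithfulness. Passing to homology gives $[b_2]=k[b_1]$, and this contradicts the independence of the homology classes of $b_1$ and $b_2$.

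That independence is the delicate point. The boundary curves of the pants are nonseparating in the setup, since $b_1$ and $b_2$ are standard generators in the presentation, so their classes are independent. I would verify this carefully from the chosen presentation. Once it is settled, $Q=0$, so $\phi_d(X)=\mp\phi_d(Y)$. These agree in $\PSL(2,\mathbb R)$, and faithfulness of the holonomy gives $X=Y$.
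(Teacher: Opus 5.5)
Your proposal is correct and follows the paper's own argument essentially line by line. The shared steps are the two-parameter twist family along $b_1$ and $b_2$, the linearized condition $\tr\bigl(b_1(-s)\,N\,b_1(t)\,Q\bigr)=0$ combined with the independence of $e^{\pm s\pm t}$, and the fixed-point and homology argument showing that every entry of $N=\phi_d(a_1^{-1}a_2)P$ is nonzero. The step you flag as delicate is actually immediate: the abelianization of the standard presentation is free abelian on the classes $[a_i],[b_i]$, so $[b_2]=k[b_1]$ is impossible.
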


\begin{defn}
    In either case, we call
\[
\{a_i,b_i:1\leq i\leq g\}
\]
the \emph{system of generators associated with $d$}.
\end{defn}
\subsection{The algebraic exceptional set}\label{subsec: exceptional set}

The exceptional set $\mathcal T_{\mathrm{sing}}$ was defined in
\cite[Section 8.2]{MR4874183}. We recall its definition and the
property needed below.

By the discussion of Fricke coordinates in Section~\ref{sec: Prelimimaries}, for any $A\in \pi_1(\Sigma_g)$, the function $T_A$ is a rational function in the Fricke coordinates.

Suppose that $A$ and $B$ are not trace equivalent. Then
\[
F_{A,B}
:=
T_A-T_B
\]
is a nonzero rational function in the Fricke coordinates. After taking a common denominator, we may write
\[
F_{A,B}
=\frac{P_{A,B}}{Q_{A,B}},
\]
where $P_{A,B}$ and $Q_{A,B}$ are polynomials, $P_{A,B}$ is not identically zero, and $Q_{A,B}$ does not vanish on the relevant Fricke-coordinate domain.

Define the coincidence locus
\[
\mathcal Z_{A,B}=\setdef{d\in\mathcal T_g}{T_A(d)=T_B(d)}.
\]
Equivalently,
\[
\mathcal Z_{A,B}=\setdef{d\in\mathcal T_g}{P_{A,B}(d)=0}.
\]
Since $P_{A,B}$ is not the zero polynomial, every nonempty locus $\mathcal Z_{A,B}$ is the intersection of $\mathcal T_g$ with a proper algebraic subset of the Fricke-coordinate space. In particular, it has positive codimension in $\mathcal T_g$.

We define
\[
\mathcal T_{\mathrm{sing}}
=\bigcup_{\substack{A,B\in\pi_1(\Sigma_g)\\
A\not\sim_{\tr}B}}
\mathcal Z_{A,B}.
\]

\begin{proposition}\label{prop: exceptional set}
The subset $\mathcal T_{\mathrm{sing}}\subset\mathcal T_g$ is a countable union of proper algebraic subsets of positive codimension. Moreover, if
\[
d\in\mathcal T_g\setminus\mathcal T_{\mathrm{sing}},
\]
then, for any $A,B\in\pi_1(\Sigma_g)$,
\[
T_A(d)=T_B(d)
\]
if and only if $A$ and $B$ are trace equivalent.
\end{proposition}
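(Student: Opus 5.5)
The proof splits into two parts: a countability and codimension statement about $\mathcal T_{\mathrm{sing}}$, and the "if and only if" characterization at points outside it.

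\emph{Structure of $\mathcal T_{\mathrm{sing}}$.} The group $\pi_1(\Sigma_g)$ is finitely generated, hence countable. So the index set of pairs $(A,B)$ with $A\not\sim_{\tr}B$ is countable, and $\mathcal T_{\mathrm{sing}}$ is a countable union of the loci $\mathcal Z_{A,B}$. For each such pair, the rational function $F_{A,B}=T_A-T_B$ is not identically zero on $\mathcal T_g$; this is exactly the definition of non-trace-equivalence. We then use that $\mathcal T_g$ is a nonempty open subset of the Fricke-coordinate space $\mathbb R^{6g-6}$ and that $Q_{A,B}$ does not vanish there. It follows that $P_{A,B}$ does not vanish identically on an open set, so $P_{A,B}$ is a nonzero polynomial. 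The zero set of a nonzero real polynomial on $\mathbb R^{6g-6}$ is a proper real algebraic subset of dimension at most $6g-7$. In particular it has empty interior and Lebesgue measure zero. Intersecting with the open set $\mathcal T_g$ therefore gives a set of positive codimension. One small point needs care: the Fricke coordinates form a global real-analytic chart on $\mathcal T_g$, so "algebraic subset of $\mathcal T_g$" means the trace of an algebraic subset of the coordinate space. This is the convention already used in Section~\ref{sec: Prelimimaries}.

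\emph{The equivalence.} If $A\sim_{\tr}B$, then $T_A(d)=T_B(d)$ holds at every $d$ by definition, including points outside $\mathcal T_{\mathrm{sing}}$. Conversely, suppose $d\notin\mathcal T_{\mathrm{sing}}$ and $T_A(d)=T_B(d)$. If $A$ and $B$ were not trace equivalent, then $d\in\mathcal Z_{A,B}\subset\mathcal T_{\mathrm{sing}}$, which is a contradiction. Hence $A\sim_{\tr}B$.

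\emph{Main obstacle.} The only step with real content is passing from "$T_A-T_B$ is nonzero as a function on $\mathcal T_g$" to "$P_{A,B}$ is a nonzero polynomial whose zero set has positive codimension." I plan to handle this with the identity principle. A polynomial vanishing on a nonempty open subset of $\mathbb R^n$ is identically zero. Contrapositively, a nonzero polynomial has a nowhere dense zero set, and its zero set is a real algebraic variety of dimension less than $n$.

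I also need the denominator $Q_{A,B}$ to be nonvanishing on $\mathcal T_g$. This follows from the explicit Fricke parametrization, where the only denominators that appear are the $c_i>0$ and the algebraically determined $\zeta>1$, together with their products. If $\zeta$ and the $a_g,b_g,c_g,d_g$ are only algebraic rather than rational in the coordinates, I would instead pass to the finite algebraic extension, or work on the real algebraic variety cut out by the relation $\prod_{i=1}^g[A_i,B_i]=e$. The same dimension argument applies there, because $\mathcal T_g$ is an open subset of the smooth part of that variety.
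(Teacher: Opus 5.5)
Your proposal is correct and follows the paper's argument. Both use the countability of $\pi_1(\Sigma_g)\times\pi_1(\Sigma_g)$, the fact that a non-trace-equivalent pair yields a nonzero numerator $P_{A,B}$ (with $Q_{A,B}$ nonvanishing on $\mathcal T_g$) whose zero set meets $\mathcal T_g$ in a positive-codimension algebraic set, and the immediate contradiction $d\in\mathcal Z_{A,B}\subset\mathcal T_{\mathrm{sing}}$ for the equivalence. Your additions, namely the identity principle for polynomials on the open set $\mathcal T_g$ and the caveat about whether $T_A$ is genuinely rational or only algebraic in the Fricke coordinates, just make explicit what the paper takes from its preliminaries and the cited reference.
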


\begin{proof}
The group $\pi_1(\Sigma_g)$ is countable, and hence the collection of pairs
\[
(A,B)\in\pi_1(\Sigma_g)\times\pi_1(\Sigma_g)
\]
is countable. For every pair that is not trace equivalent, the corresponding nonempty coincidence locus $\mathcal Z_{A,B}$ is a proper algebraic subset of positive codimension. Therefore, $\mathcal T_{\mathrm{sing}}$ is a countable union of such subsets.

Now let $d\notin\mathcal T_{\mathrm{sing}}$. Suppose that
\[
T_A(d)=T_B(d).
\]
If $A$ and $B$ were not trace equivalent, then by definition
\[
d\in\mathcal Z_{A,B}
\subset\mathcal T_{\mathrm{sing}},
\]
which is a contradiction. Hence $A\sim_{\tr}B$. The converse follows immediately from the definition of trace equivalence.
\end{proof}

We apply this observation to the trace-equivalence relations introduced above. Let $\alpha$ be a nice splitting multicurve, let $\Sigma'$ be the corresponding complementary subsurface, and let $A_d$ denote the element associated with $d$: namely,
$A_d=a_1$
in the one-holed torus case and
$A_d=a_1^{-1}a_2$
in the pair-of-pants case. For $X,Y\in\pi_1(\Sigma')$, recall that $X$ and $Y$ are $A_d$-trace equivalent if
\[
A_dX\sim_{\tr} A_dY.
\]

\begin{corollary}\label{cor: bounded trace multiplicity}
Let
\[
d\in\mathcal T_g\setminus\mathcal T_{\mathrm{sing}}.
\]
If $X,Y\in\pi_1(\Sigma')$ satisfy
\[
\tr^2\!\left(\phi_d(A_dX)\right)
=
\tr^2\!\left(\phi_d(A_dY)\right),
\]
then $X$ and $Y$ are $A_d$-trace equivalent.

Consequently, every squared-trace value in the family
\[
\setdef{\tr^2\!\left(\phi_d(A_dX)\right)}{X\in\pi_1(\Sigma')}
\]
is attained at most three times in the one-holed torus case and at most once in the pair of pants case.
\end{corollary}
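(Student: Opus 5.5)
The corollary follows by combining Proposition~\ref{prop: exceptional set} with Lemmas~\ref{lemma: trace of case 1} and~\ref{lemma: trace of case 2}. For the first assertion, fix $d\in\mathcal T_g\setminus\mathcal T_{\mathrm{sing}}$ and $X,Y\in\pi_1(\Sigma')$ with $\tr^2(\phi_d(A_dX))=\tr^2(\phi_d(A_dY))$. In the notation of the section this says $T_{A_dX}(d)=T_{A_dY}(d)$. Since $d\notin\mathcal T_{\mathrm{sing}}$, Proposition~\ref{prop: exceptional set} applied to $A=A_dX$ and $B=A_dY$ gives $A_dX\sim_{\tr}A_dY$, which is exactly the statement that $X$ and $Y$ are $A_d$-trace equivalent.

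One point needs checking: trace equivalence is an identity of the functions $T_A$ on all of $\mathcal T_g$, while the elements $a_i,b_i$ (and hence $A_d$ and $\pi_1(\Sigma')$) were chosen using the splitting multicurve at the particular metric $d$. I would stress that the system of generators associated with $d$ is fixed once and for all as elements of the abstract group $\pi_1(\Sigma_g)$. The relation $\sim_{\tr}$ and the lemmas are statements about these fixed group elements. In particular, the Fenchel--Nielsen deformations in the proofs of Lemmas~\ref{lemma: trace of case 1} and~\ref{lemma: trace of case 2} are performed near an arbitrary base point, and any base point may be used, since trace equivalence gives equality of traces at every point of $\mathcal T_g$. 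So there is no circularity in applying those lemmas to the generators attached to $d$.

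For the counting assertion, fix a value $v$ and let $S_v=\{X\in\pi_1(\Sigma'):\tr^2(\phi_d(A_dX))=v\}$. By the first part, any two elements of $S_v$ are $A_d$-trace equivalent. This relation is an equivalence relation because $\sim_{\tr}$ is defined by equality of functions. Hence $S_v$ lies in a single $A_d$-trace-equivalence class. Lemma~\ref{lemma: trace of case 1} bounds such a class by three elements in the one-holed torus case, and Lemma~\ref{lemma: trace of case 2} bounds it by one element in the pair-of-pants case. Therefore $|S_v|\le 3$, respectively $|S_v|\le 1$.

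The only real obstacle is the well-definedness issue above: making sure the lemmas, proved by deforming around some base point of $\mathcal T_g$, apply to the generator system selected at $d$. Everything else is a direct chaining of the stated results.
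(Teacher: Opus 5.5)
Your proposal is correct and is essentially the paper's proof. You rewrite the hypothesis as $T_{A_dX}(d)=T_{A_dY}(d)$, use $d\notin\mathcal T_{\mathrm{sing}}$ with Proposition~\ref{prop: exceptional set} to get $A_dX\sim_{\tr}A_dY$, and read off the cardinality bounds from Lemmas~\ref{lemma: trace of case 1} and~\ref{lemma: trace of case 2}; your two extra remarks (the generators attached to $d$ are fixed elements of the abstract group, so the lemmas proved at an arbitrary base point apply, and $A_d$-trace equivalence is an equivalence relation) only make explicit what the paper leaves implicit.
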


\begin{proof}
The equality
\[
\tr^2\!\left(\phi_d(A_dX)\right)
=
\tr^2\!\left(\phi_d(A_dY)\right)
\]
means that
\[
T_{A_dX}(d)=T_{A_dY}(d).
\]
Since $d\notin\mathcal T_{\mathrm{sing}}$, Proposition~\ref{prop: exceptional set} implies that
\[
A_dX\sim_{\tr}A_dY.
\]
Thus $X$ and $Y$ are $A_d$-trace equivalent. The cardinality bounds now follow from Lemmas~\ref{lemma: trace of case 1} and~\ref{lemma: trace of case 2}.
\end{proof}

The important point is that $\mathcal T_{\mathrm{sing}}$ removes only accidental trace coincidences. Trace identities that hold for every hyperbolic structure are not included in the exceptional set; instead, they are controlled by the uniform bounds on the $A_d$-trace-equivalent classes established in the preceding subsections.

Finally, we show that all elements in this family are primitive.
\begin{lemma}\label{lem: primitive}
For every $X\in\pi_1(\Sigma')$, the element $A_dX$ is not a proper power. Consequently, it determines a primitive closed geodesic on $(\Sigma_g,d)$.
\end{lemma}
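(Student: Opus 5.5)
The plan is to argue in the abelianization $H_1(\Sigma_g;\mathbb Z)$. In both cases the system of generators $\{a_i,b_i:1\leq i\leq g\}$ associated with $d$ satisfies the single relation $\prod_{i=1}^g[a_i,b_i]=e$. Hence $H_1(\Sigma_g;\mathbb Z)$ is free abelian with basis $[a_1],[b_1],\dots,[a_g],[b_g]$. The key observation is that the coefficient of $[a_1]$ in $[A_dX]$ is $\pm1$ for every $X\in\pi_1(\Sigma')$. This forces any root of $A_dX$ to have exponent $\pm1$.

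In the one-holed torus case, $\pi_1(\Sigma')$ is generated by $a_2,b_2,\dots,a_g,b_g$, so $[X]$ lies in the span of $[a_i],[b_i]$ for $i\geq2$. Therefore $[a_1X]=[a_1]+[X]$ has $[a_1]$-coefficient equal to $1$. In the pair-of-pants case, $\pi_1(\Sigma')$ is generated by $b_1,b_2,a_3,b_3,\dots,a_g,b_g$. None of these involves $[a_1]$ or $[a_2]$, so
\[
[a_1^{-1}a_2X]=-[a_1]+[a_2]+[X]
\]
has $[a_1]$-coefficient equal to $-1$.

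Now suppose $A_dX=Z^k$ for some $Z\in\pi_1(\Sigma_g)$ and $k\geq1$. Passing to homology gives $k[Z]=[A_dX]$. Comparing $[a_1]$-coefficients, $k$ divides $\pm1$, so $k=1$. Hence $A_dX$ is not a proper power. Since $[A_dX]\neq0$, the element is also nontrivial. It is therefore a hyperbolic element of the torsion-free surface group.

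To get the geometric statement, I use that in $\pi_1(\Sigma_g)$ every nontrivial element lies in a unique maximal cyclic subgroup, its centralizer. So every nontrivial element is a positive power of a primitive element. Under the correspondence between conjugacy classes and closed geodesics, the closed geodesic of $A_dX$ is the $k$-fold iterate of a primitive one, with $k$ the exponent above. Since $k=1$, the geodesic is primitive.

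The only point needing care is that, in the pair-of-pants case, the elements $a_1,a_2$ chosen in the preceding subsection really complete $b_1,b_2,a_3,\dots,b_g$ to a standard generating set with the single surface relation. Only this guarantees the linear independence of the homology classes used above. I would take this from the construction preceding Lemma~\ref{lemma: trace of case 2}. Everything else is routine.
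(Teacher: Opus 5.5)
Your proof is correct and follows essentially the same route as the paper: both note that $[A_dX]\in H_1(\Sigma_g;\mathbb Z)$ has $[a_1]$-coordinate $\pm1$, hence is a primitive vector, so $A_dX=Y^n$ forces $|n|=1$. You additionally spell out the standard passage from ``not a proper power'' to ``primitive closed geodesic'' via cyclic centralizers in $\pi_1(\Sigma_g)$, which the paper leaves implicit.
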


\begin{proof}
The homology class
\[
[A_dX]\in H_1(\Sigma_g;\mathbb Z)\cong\mathbb Z^{2g}
\]
is a primitive vector. Indeed, in the one-holed-torus case its
$[a_1]$-coordinate is $1$, while in the pair-of-pants case its
$[a_1]$- and $[a_2]$-coordinates are $-1$ and $1$, respectively.
If $A_dX=Y^n$ for some $|n|\geq2$, then
\[
[A_dX]=n[Y],
\]
contradicting the primitivity of $[A_dX]$.
\end{proof}

\section{Proof of Theorem~\ref{thm}}\label{Sec: proof}
\begin{proof}[Proof of Theorem~\ref{thm}]
Let
\[
d\in\mathcal T_g\setminus\mathcal T_{\mathrm{sing}}.
\]
By Proposition~\ref{Propo: nice splitting multicure}, there exists a nice splitting multicurve $\alpha$ on $(\Sigma_g,d)$ of total length at most $H(g)$. Write
\[
\Sigma_g\setminus\alpha=\Sigma\sqcup\Sigma',
\]
where $\Sigma$ has type $(1,1)$ or $(0,3)$.

Choose the corresponding system of generators as in Section~\ref{sec: different traces}. If $\Sigma$ has type $(1,1)$, set
\[
A_d=a_1.
\]
If $\Sigma$ has type $(0,3)$, set
\[
A_d=a_1^{-1}a_2.
\]

Finally, define
\[
\Gamma'_d=\phi_d\bigl(\pi_1(\Sigma')\bigr),
\qquad
X'_d=\Gamma'_d\backslash\mathbf H^2.
\]

By Corollary~\ref{cor: bounded trace multiplicity}, for all $X,Y\in \pi_1(\Sigma')$ which are not $A_d$-trace equivalent,
\[\tr^2\left(\phi_d(A_dX)\right)\neq \tr^2\left(\phi_d(A_dY)\right).\]

Fix a basepoint $o\in \mathbf{H}^2$ and $R>0$. Let \[N'(d,R)=\setdef{Z\in \pi_1(\Sigma')}{\dist_{\mathbf{H}^2}(\phi_d(Z)o,o)\leq R}.\]
Recall that $N_d^*(R)=\#(\mathcal{L}_d^*(\Sigma_g)\cap [0,R])$.

Since for all $X\in \pi_1(\Sigma')$, \[\ell_d(A_dX)\leq \dist_{\mathbf{H}^2}\left(\phi_d(A_dX)o,o\right)\leq \dist_{\mathbf{H}^2}\left(\phi_d(A_d)o,o\right)+\dist_{\mathbf{H}^2}\left(\phi_d(X)o,o\right),\] for all $Z\in N'(d,R)$, $\ell_{d}(A_dZ)\leq R+\dist_{\mathbf{H}^2}\left(\phi_d(A_d)o,o\right)$.  

By Corollary~\ref{cor: bounded trace multiplicity} and Lemma~\ref{lem: primitive}, 
\[
N_d^*(R)\geq \frac{1}{3}\#N'\left(d,R-\dist_{\mathbf{H}^2}\left(\phi_d(A_d)o,o\right)\right).
\]
Note that $\Gamma'_d$ is a convex cocompact subgroup with convex core $\Sigma'$. Theorem~\ref{Thm: critical} then implies that there exists a constant $\tau(d)>0$ such that for $R$ sufficiently large,
\[
N_d^*(R)\geq \tau(d)e^{\delta(\Gamma'_d)R},
\]
where $\delta(\Gamma'_d)$ is the critical exponent of $\Gamma'_d$.

To estimate the critical exponent of $\Gamma'_d$, note that the boundary of $\Sigma'$ is $\alpha$. Hence the Cheeger constant 
\[
h(X'_d)\leq \frac{\ell_d(\alpha)}{(4g-6)\pi}.
\]
By Theorem~\ref{buser} and Lemma~\ref{lemma: better kappa}, 
\[
\lambda_0(X'_d)\leq \frac{0.682\ell_d(\alpha)}{(4g-6)\pi}.
\]

In view of Theorem~\ref{cheeger}, $\delta(\Gamma'_d)>\frac{1}{2}$ if and only if $\lambda_0(X'_d)<\frac{1}{4}$. By Proposition~\ref{Propo: nice splitting multicure}, it is sufficient to show that 
\[
\frac{0.682H(g)}{(4g-6)\pi}<\frac{1}{4}.
\]

\textbf{Claim:} This inequality is valid for $g\geq 6$.

We will prove the claim at the end of the section.

For $g\geq 6$, let $\delta(g)$ be the larger root of the following equation
\[
x(1-x)=\frac{0.682H(g)}{(4g-6)\pi}.
\]
That is, 
\begin{equation}\label{eq: defind of deltag}
\delta(g)=\frac{1}{2}\left(1+\sqrt{1-\frac{2.728H(g)}{(4g-6)\pi}}\right),
\end{equation}
where $H(g)$ is given by equation~\eqref{eq: define h}. 
Then $\delta(\Gamma'_d)\geq \delta(g)>\frac{1}{2}$.

The result follows.
\end{proof}
Before the proof of the claim, we remark that for $g$ sufficiently large, $H(g)\leq 8\ln g$. Consequently, for all sufficiently large $g$, one has the simpler estimate
\[\delta(g)>1-\frac{5.456\ln g}{(2g-3)\pi}.\]
\begin{proof}[Proof of the claim]
    Let 
    \[
    F(t)=4\ln t+5.362-4\ln2+4\ln\left(1+\frac{4(t-1)\pi}{2\ln t+2.681}\right)-\frac{(4t-6)\pi}{2.728}.
    \]
    Let $a(t)=\ln t+1.3405$. Then
    \[
    \frac{1}{4}F'(t)=\frac{1}{t}+\frac{\frac{1}{t}+2\pi}{a(t)+2\pi(t-1)}-\frac{1}{ta(t)}-\frac{\pi}{2.728}.
    \]
    For $t\geq 3$, $a(t)>0$. Therefore
    \[
    \frac{1}{4}F'(t)\leq \frac{1}{t}+\frac{\frac{1}{t}+2\pi}{2\pi(t-1)}-\frac{\pi}{2.728}\leq \frac{1}{3}+\frac{\frac{1}{3}+2\pi}{4\pi}-\frac{\pi}{2.728}<0.
    \]
    Therefore $F(t)$ is decreasing on $(3,\infty)$. Direct computation shows that $F(5)>0$ and $F(6)<0$. Thus $F(t)<0$ for all $t\geq 6$. 
    
    Note that 
    \[
\frac{0.682H(g)}{(4g-6)\pi}<\frac{1}{4}
\]
if and only if $F(g)<0$.

    The result follows.
\end{proof}
\printbibliography
\end{document}